\newcolumntype{H}{>{\setbox0=\hbox\bgroup}c<{\egroup}@{}}
\numberwithin{equation}{section}
\newtheorem{thm}{Theorem}[section]
\newtheorem{pr}[thm]{Proposition}
\newtheorem{lm}[thm]{Lemma}
\newtheorem{re}[thm]{Remark}
\newtheorem{con}[thm]{Conjecture}
\newtheoremstyle{case}{}{}{}{}{}{:}{ }{}
\theoremstyle{case}
\newtheorem{case}{Case}
\newtheoremstyle{caso}{}{}{}{}{}{:}{ }{}
\theoremstyle{caso}
\DeclareRobustCommand{\svdots}{
  \vbox{%
    \baselineskip=0.33333\normalbaselineskip
    \lineskiplimit=0pt
    \hbox{.}\hbox{.}\hbox{.}%
    \kern-0.2\baselineskip
  }%
}
\newcommand{\floor}[1]{\left\lfloor #1 \right\rfloor}
\newcommand{\flce}[1]{\left\lfloor #1 \right\rceil}
\theoremstyle{remark}
\newenvironment{proof*}[1]
  {%
   \begin{proof}}
  {\end{proof}}
\let\@@pmod\pmod
\DeclareRobustCommand{\pmod}{\@ifstar\@pmods\@@pmod}
\def\@pmods#1{\mkern4mu({\operator@font mod}\mkern 6mu#1)}
\title[On the Bessenrodt-Ono type inequality for the $A$-partition function]{On the Bessenrodt-Ono type inequality for a wide class of $A$-partition functions}
\author{Krystian Gajdzica}
\address{Institute of Mathematics \\
	Faculty of Mathematics and Computer Science \\
	Jagiellonian University in Cracow
}
\email{krystian.gajdzica@doctoral.uj.edu.pl}
\keywords{integer partition; $A$-partition function; Bessenrodt-Ono inequality; $m$-ary partition function; power partition function.}
\subjclass[2020]{Primary 05A17, 11P82; Secondary 05A20.}
\begin{document}

\setlength{\parindent}{10mm}
\maketitle

\begin{abstract}
The $A$-partition function $p_A(n)$ enumerates those partitions of $n$ whose parts belong to a fixed (finite or infinite) set $A$ of positive integers. On the other hand, the extended $A$-partition function $p_A\left(\bm{\mu}\right)$ is defined as an multiplicative extension of the $A$-partition function to a function on $A$-partitions. In this paper, we investigate the Bessenrodt-Ono type inequality for a wide class of $A$-partition functions. In particular, we examine the property for both the $m$-ary partition function $b_m(n)$ and the $d$-th power partition function $p_d(n)$. Moreover, we show that $b_m(\bm{\mu})$ ($p_d(\bm{\mu})$) takes its maximum value at an explicitly described set of $m$-ary partitions (power partitions), where $\bm{\mu}$ is an $m$-ary partition (a power partition) of $n$. Additionally, we exhibit analogous results for the Fibonacci partition function and the `factorial' partition function. It is worth pointing out that an elementary combinatorial reasoning plays a crucial role in our investigation. 
\end{abstract}

\section{Introduction}

A partition $\lambda=(\lambda_1,\lambda_2,\ldots,\lambda_j)$ of a non-negative integer $n$ is a weakly decreasing sequence of positive integers such that
\begin{align*}
    n=\lambda_1+\lambda_2+\cdots+\lambda_j.
\end{align*}
Elements $\lambda_i$ are called parts of the partition $\lambda$. By $P(n)$, we mean the set of all partitions of $n$. Further, the number of all partitions of $n$ is denoted by $p(n)$, i.e. $p(n)=\#P(n)$. For instance, we have that $p(4)=5$ with
\begin{align*}
    (4),\hspace{0.2cm}(3,1),\hspace{0.2cm}(2,2),\hspace{0.2cm}(2,1,1)\hspace{0.2cm}\text{ and }\hspace{0.2cm}(1,1,1,1).
\end{align*}
It is well known that the generating function for $p(n)$ takes the form
\begin{align*}
    \sum_{n=0}^\infty p(n)q^n=\prod_{i=1}^\infty\frac{1}{1-q^i}.
\end{align*}
That is Euler's result from 1748. Since then, the partition function has been investigated from various perspectives. Therefore, there is a plethora of literature devoted to its analytic, arithmetic and combinatorial properties. We refer the reader to \cite{AK, GA2, GA1, H, Sills} for a comprehensive introduction to the theory of partitions.

Now, let us recall that a sequence $\left(c_n\right)_{n=0}^\infty$ of real numbers is said to be log-concave if it fulfills $c_n^2>c_{n-1}c_{n+1}$ for every $n\geqslant1$. In 2015, DeSalvo and Pak \cite{DSP} reproved Nicolas' theorem \cite{N} concerning the log-concavity of the partition function.

\begin{thm}[DeSalvo-Pak, Nicolas]\label{Theorem: DeSalvo-Pak}\ \\
The partition function is log-concave for every $n\geqslant26$.
\end{thm}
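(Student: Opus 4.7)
The plan is to establish the strict inequality $p(n)^{2}>p(n-1)\,p(n+1)$ for every $n\geqslant 26$ by splitting the range into a ``large-$n$'' asymptotic part and a ``small-$n$'' numerical part. For the large part I would rely on a sharp two-sided bound for $p(n)$ coming from Rademacher's exact series; in a form going back to Lehmer,
$$p(n)=\frac{\sqrt{12}}{24n-1}\!\left(1-\frac{1}{\mu(n)}\right)e^{\mu(n)}+E(n),\qquad \mu(n):=\frac{\pi}{6}\sqrt{24n-1},$$
with an explicit, exponentially smaller remainder $E(n)$. Any two-sided estimate $L(n)\leqslant p(n)\leqslant U(n)$ with $U(n)/L(n)=1+O\bigl(e^{-c\sqrt{n}}\bigr)$ would serve the same role.

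Next I would set $R(n):=p(n)^{2}/\bigl(p(n-1)\,p(n+1)\bigr)$ and take logarithms. The dominant contribution comes from the exponent: a Taylor expansion of $\mu$ at $n$ gives
$$2\mu(n)-\mu(n-1)-\mu(n+1)=-\mu''(n)+O\bigl(n^{-7/2}\bigr),$$
which, since $\mu''(n)<0$, is a \emph{positive} quantity of order $n^{-3/2}$. Applying the same discrete second difference to the algebraic prefactor $(\sqrt{12}/(24n-1))(1-1/\mu(n))$ produces only an $O(n^{-2})$ correction, and the relative contribution from the remainder $E(n)$ is exponentially small. Comparing these three orders of magnitude yields an explicit threshold $N_{0}$ above which $\log R(n)>0$; a careful but routine bookkeeping gives an $N_{0}$ of manageable size.

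For the remaining finite range $26\leqslant n<N_{0}$ I would verify the inequality directly from a table of partition values. The same table also confirms that the constant $26$ is sharp: one checks that $p(25)^{2}<p(24)\,p(26)$ but $p(26)^{2}>p(25)\,p(27)$. The principal obstacle is the asymptotic step: because the positive gap in $\log R(n)$ that must be isolated is only of order $n^{-3/2}$, any error estimate for $p(n)$ whose \emph{relative} size exceeds this order is useless. Thus the naive Hardy--Ramanujan approximation with polynomial error is not sharp enough, and the full Rademacher-type refinement---in particular, the correction $1-1/\mu(n)$ sitting \emph{inside} the main term---must be tracked honestly. This is the analytically substantive ingredient of the whole argument.
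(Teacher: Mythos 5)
This theorem is quoted in the paper from the literature (DeSalvo--Pak \cite{DSP}, Nicolas \cite{N}) and is not proved there, so there is no internal proof to compare against. Your outline faithfully reproduces the DeSalvo--Pak strategy: Lehmer's explicit Rademacher-type bounds, the observation that the discrete second difference $2\mu(n)-\mu(n-1)-\mu(n+1)\approx-\mu''(n)>0$ is of order $n^{-3/2}$ and dominates the $O(n^{-2})$ contribution of the prefactor and the exponentially small remainder, plus a finite check down to $n=26$ (and your numerical confirmation that $p(25)^2<p(24)p(26)$ while $p(26)^2>p(25)p(27)$ is correct). The only caveat is that the ``careful but routine bookkeeping'' you defer is precisely where the bulk of the published proof's labor lies, so as written this is a correct plan rather than a complete argument.
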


Their paper has initiated wide research related to investigating similar phenomena for other variations of the partition function. We encourage the reader to see \cite{BKRT, Craig, Dawsey, KG2, HN2, HN7, HNT2, Liu, Ono, MU} for more results related to that subject.

However, DeSalvo and Pak's paper has also motivated to study other inequalities for partition statistics. In 2016, Bessenrodt and Ono \cite{BO} discovered the following result.

\begin{thm}[Bessenrodt-Ono]\label{Theorem: Bessenrodt-Ono (1)}\ \\
    Let $a,b\geqslant2$ be positive integers such that $a+b>9$. Then
    \begin{equation*}
        p(a)p(b)>p(a+b).
    \end{equation*}
\end{thm}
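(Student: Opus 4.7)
The plan is to establish $p(a)p(b) > p(a+b)$ by combining asymptotic estimates for $p(n)$ with a finite computational check. Without loss of generality assume $a \geqslant b \geqslant 2$, which together with $a+b > 9$ forces $a \geqslant 5$.

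For the asymptotic reduction, I would invoke the Hardy--Ramanujan formula
$$p(n) \sim \frac{1}{4n\sqrt{3}}\exp\!\left(\pi\sqrt{\tfrac{2n}{3}}\right)$$
together with effective two-sided Lehmer-type bounds to obtain an inequality of the form
$$\frac{p(a)p(b)}{p(a+b)} \;\geqslant\; C\cdot\frac{a+b}{ab}\cdot\exp\!\left(\pi\sqrt{\tfrac{2}{3}}\,\bigl(\sqrt{a}+\sqrt{b}-\sqrt{a+b}\bigr)\right)$$
for an explicit constant $C > 0$. Since $f(a) := \sqrt{a} - \sqrt{a+b}$ is increasing in $a$, for $a \geqslant b$ one obtains the uniform lower bound $\sqrt{a}+\sqrt{b}-\sqrt{a+b} \geqslant (2-\sqrt{2})\sqrt{b}$. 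Hence the exponential factor dominates the $1/(ab)$ polynomial decay as soon as $b$ exceeds a computable threshold $b_0$, settling the case $b \geqslant b_0$.

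For each remaining $b \in \{2,\dots,b_0\}$ the quantity $p(b)$ is a fixed constant and the inequality reduces to $p(a+b)/p(a) < p(b)$; explicit ratio bounds such as $p(n+1)/p(n) < 1 + \pi/\sqrt{6n}$ confine this ratio to an interval that leaves only finitely many small values of $a$ to verify against a pre-computed table of $p(n)$. The chief obstacle is calibrating $b_0$ and the numerical constants so that the final case analysis stays short enough to present cleanly; a purely combinatorial injection $P(a+b)\hookrightarrow P(a)\times P(b)$ would better match the elementary spirit of the present paper, but any such construction I can envision needs a delicate case split depending on whether $\pi \vdash a+b$ contains a sub-multiset of parts summing exactly to $a$.
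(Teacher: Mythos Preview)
The paper does not itself prove this statement: Theorem~\ref{Theorem: Bessenrodt-Ono (1)} is quoted in the Introduction as a known result of Bessenrodt and Ono, with the comment that ``the original proof of the above identity is based on asymptotic estimates due to Lehmer'' and that two alternative proofs (a combinatorial one by Alanazi--Gagola--Munagi and an inductive one by Heim--Neuhauser) also exist. Your outline follows exactly the first of these routes --- effective Hardy--Ramanujan/Lehmer bounds reducing the problem to a finite check --- so in spirit it coincides with what the paper attributes to the original source rather than differing from anything the paper does.

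What you have written, however, is a strategy rather than a proof: the constant $C$, the threshold $b_0$, and the finite table verification are all left unspecified, and the single-step ratio bound you quote only shows $p(a+b)/p(a)<p(b)$ for $a$ above a threshold that must still be computed for each small $b$. These are precisely the calibration issues you yourself flag. Your closing remark is also apt: the elementary injection machinery developed in this paper (Theorems~\ref{Theorem: General B-O} and~\ref{Theorem: Injection}) does \emph{not} apply to $A=\mathbb{N}_+$, since the gap hypothesis $a_k-a_l\geqslant a_3$ fails, and the authors explicitly acknowledge this limitation at the end of Section~3.
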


The original proof of the above identity is based on asymptotic estimates due to Lehmer \cite{L}. There are also two alternative proofs of Theorem \ref{Theorem: Bessenrodt-Ono (1)}. The first of them is in combinatorial manner due to Alanazi, Gagola and Munagi \cite{AGM}. The second one, on the other hand, is done via induction on $a+b$ by Heim and Neuhauser \cite{HN3}.

It is worth pointing out that the Bessenrodt-Ono inequality is not only art of art's sake, but it allows us to determine the maximum value of the form

\begin{equation*}
    \max{p(n)}:=\max{\left\{p(\bm{\lambda}):\bm{\lambda}\text{ is a partition of } n\right\}},
\end{equation*}
where $p(\bm{\lambda})$ denotes the extended partition function defined as
\begin{align*}
    p(\bm{\mu}):=\prod_{i=1}^jp(\mu_i)\hspace{0.2cm}\text{ for an arbitrary partition }\hspace{0.2cm}\bm{\mu}=(\mu_1,\mu_2,\ldots,\mu_j).
\end{align*}
More precisely, the following is true.

\begin{thm}[Bessenrodt-Ono]\label{Theorem: Bessenrodt-Ono (2)}\ \\
    Let $n\in\mathbb{N}_+$. For $n\geqslant4$ and $n\not=7$, the maximal value $\max{p(n)}$ of the partition function on $P(n)$ is attained at the partition
    \begin{align*}
        &\left(4,4,4,\ldots,4,4\right)\text{ when } n\equiv0\pmod*{4},\\
        &\left(5,4,4,\ldots,4,4\right)\text{ when } n\equiv1\pmod*{4},\\
        &\left(6,4,4,\ldots,4,4\right)\text{ when } n\equiv2\pmod*{4},\\
        &\left(6,5,4,\ldots,4,4\right)\text{ when } n\equiv3\pmod*{4}.
    \end{align*}
    In particular, if $n\geqslant8$, then
    \begin{align*}
        \max{p(n)}=\begin{cases}
            5^{\frac{n}{4}}, &\text{ if } n\equiv0\pmod*{4},\\
            7\cdot5^{\frac{n-5}{4}}, &\text{ if } n\equiv1\pmod*{4},\\
            11\cdot5^{\frac{n-6}{4}}, &\text{ if } n\equiv2\pmod*{4},\\
            11\cdot7\cdot5^{\frac{n-11}{4}}, &\text{ if } n\equiv0\pmod*{4}.
        \end{cases}
    \end{align*}
\end{thm}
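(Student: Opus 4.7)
The plan is to combine Theorem~\ref{Theorem: Bessenrodt-Ono (1)} with a short list of numerical inequalities on $p(2),\ldots,p(9)$ in order to force any extremal partition into the alphabet $\{4,5,6\}$.

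Let $\bm\mu=(\mu_1,\dots,\mu_j)$ be a partition of $n$ at which $p(\bm\mu)$ attains its maximum. First I would show that every part lies in $\{2,\ldots,9\}$. If some $\mu_i\ge 10$, the splitting $\mu_i=5+(\mu_i-5)$ has both summands $\ge 2$ and sum $\ge 10>9$, so Theorem~\ref{Theorem: Bessenrodt-Ono (1)} strictly increases $p(\bm\mu)$; and a part equal to $1$ (with $j\ge 2$) can be merged into another part, strictly increasing $p(\bm\mu)$ because $p$ is strictly increasing and $p(1)=1$.

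Reading off the values $p(2),\ldots,p(9)=2,3,5,7,11,15,22,30$, I would then record a handful of elementary exchange inequalities, among them
\begin{align*}
&p(2)^2<p(4),\quad p(3)^2<p(6),\quad p(2)p(3)<p(5),\quad p(2)p(4)<p(6),\\
&p(2)p(5)<p(3)p(4),\quad p(3)p(5)<p(4)^2,\quad p(3)p(6)<p(4)p(5),\\
&p(8)<p(4)^2,\quad p(9)<p(4)p(5),\quad p(5)^2<p(4)p(6),\quad p(6)^2<p(4)^3,
\end{align*}
together with the equality $p(7)=p(3)p(4)$. A short finite check shows that every combination of one ``forbidden'' part (in $\{2,3,7,8,9\}$) with one companion from $\{2,\ldots,9\}$, as well as the singletons $8$ and $9$, admits such a replacement: the parts on the left can be exchanged for those on the right, preserving $n$ and strictly increasing $p(\bm\mu)$. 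Iterating these moves eventually confines the parts of $\bm\mu$ to $\{4,5,6\}$ with at most one $5$ and at most one $6$, and the residue of $n$ modulo $4$ then singles out exactly one of the four shapes in the statement. Evaluating with $p(4)=5$, $p(5)=7$, $p(6)=11$ then yields the closed-form expressions for $n\ge 8$.

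The main obstacle is the termination of this exchange procedure, since a move such as $2+5\to3+4$ reintroduces a part ($3$) that we still wish to eliminate. I would handle this by noting that $P(n)$ is finite and every move strictly increases $p(\bm\mu)$, so no partition of $n$ can be revisited; equivalently, one can define a monovariant such as the pair (number of parts outside $\{4,5,6\}$, largest such part) and verify that it strictly decreases under each listed move. The exceptional case $n=7$ is explained by the equality $p(7)=p(3)p(4)$, which lets the partitions $(7)$ and $(4,3)$ tie, and the small values $n\in\{4,5,6\}$ — where $\bm\mu$ is forced to equal $(n)$ — must be checked by direct inspection.
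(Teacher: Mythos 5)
The paper itself does not prove this statement --- it is quoted from Bessenrodt--Ono \cite{BO} without proof --- so your proposal can only be judged on its own merits. The strategy you describe (use Theorem~\ref{Theorem: Bessenrodt-Ono (1)} to cap the parts at $9$, then run a finite list of exchange inequalities to force the alphabet $\{4,5,6\}$) is exactly the standard argument for this result, and almost all of it goes through. One simplification: since you may assume from the outset that $\bm\mu$ is a maximizer, no strictly improving move can apply to it, so the termination/monovariant discussion is unnecessary --- you never iterate, you just derive a contradiction. Your worry about $2+5\to 3+4$ reintroducing a $3$ therefore evaporates.

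There is, however, one concrete gap in the finite check. Your claim that \emph{every} pair consisting of a forbidden part and a companion from $\{2,\dots,9\}$ admits a strictly improving replacement fails for the pair $(3,4)$: here $p(3)p(4)=15=p(7)$, and $7$ has no representation as a sum of elements of $\{4,5,6\}$, so no exchange supported on these two parts is strictly improving. Consequently your listed moves do not rule out a maximizer of the form $(4^k,3)$ when $n\equiv 3\pmod 4$ (e.g.\ $(4,4,3)$ for $n=11$, with $p$-value $75$, versus the true maximum $p(6)p(5)=77$). The fix is one additional three-part exchange, $3+4+4\to 5+6$, justified by $p(3)p(4)^2=75<77=p(5)p(6)$: a $3$ adjacent to anything other than a $4$ is killed by your pairwise list, a $3$ adjacent to at least two $4$'s is killed by this move, and a $3$ adjacent to exactly one $4$ and nothing else forces $n=7$, which is the excluded case. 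With that inequality added (and the routine verification of the remaining pairs such as $(2,6)$, $(2,7)$, $(7,x)$, which all do work), the argument is complete.
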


Similarly to DeSalvo and Pak's result \cite{DSP}, Theorem \ref{Theorem: Bessenrodt-Ono (1)} and Theorem \ref{Theorem: Bessenrodt-Ono (2)} have emerged extensive research devoted to exploring analogous properties for other partition statistics. For some works related to the topic see, for instance, \cite{BB, Chern, Dawsey, KG2, KG3, HNT2, Hou, Li, Liu, Males}. 

In this paper, on the other hand, we investigate the Bessenrodt-Ono type inequality for the so-called $A$-partition function. Let us recall that for a given set (or multiset) $A$ of positive integers, the $A$-partition function $p_A(n)$ enumerates those partitions of $n$ whose parts are elements from $A$. It is well-known that the generating function for $p_A(n)$ is given by
\begin{align*}
    \sum_{n=0}^\infty p_A(n)q^n=\prod_{a\in A}\frac{1}{1-q^a}.
\end{align*}

For a various examples of $A$-partition functions, we encourage the reader to see classical Andrews' books \cite{GA2, GA1}.


Here, we focus on some particular choices for set $A$. For instance, we investigate the Bessenrodt-Ono type inequality for the $m$-ary partition function $b_m(n)$, i.e.  the $A$-partition function $p_A(n)$ with $A=\{m^n:n\in\mathbb{N}\}$ and $m\geqslant2$. For some additional information related to $m$-ary partitions, we refer to \cite{Al, A1, Mahler, RS1, RS2, Z}. Further, we deal with the ($d$-th) power partition function $p_d(n):=p_{A_d}(n)$, where $A_d=\{n^d:n\in\mathbb{N}_+\}$ and $d\geqslant2$ (for more details, see \cite{O’Sullivan, Tenenbaum, MU}). Moreover, we also apply the obtained results to derive the analogues of Theorem \ref{Theorem: Bessenrodt-Ono (2)}. 

However, the most important result is a property which allows us to examine the validity of the Bessenrodt-Ono inequality for a rich class of $A$-partition functions.


\begin{thm}\label{Theorem: General B-O}
     Let $A=\{1,a_2,a_3,\ldots\}$ be an arbitrary (finite or infinite) set of positive integers with $1<a_2<a_3<\cdots$ such that $a_k-a_l\geqslant a_3$ for every $k>l\geqslant3$ and $2a_2\leqslant a_3$. Suppose further that the inequality
     \begin{align*}
         p_A(w)p_A(z)>p_A(w+z)
     \end{align*}
     is satisfied for all numbers $w,z\geqslant a_3$ with $2a_3\leqslant w+z\leqslant2(a_3+a_2)-1$. Then, the inequality  
    \begin{align*}
         p_A(w)p_A(z)>p_A(w+z)
     \end{align*}
    holds for every $w,z\geqslant a_3$.
\end{thm}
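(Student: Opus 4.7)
The plan is to prove the result by strong induction on $n = w+z$, with the base case $n \in [2a_3, 2(a_3+a_2)-1]$ furnished by hypothesis. For the inductive step, fix $w, z \geq a_3$ with $w+z \geq 2(a_3+a_2)$ and, after relabelling, assume $z \geq w$; this forces $z \geq a_3 + a_2$ and hence $z - a_2 \geq a_3$. Conditioning on whether a partition contains a part equal to $a_2$ gives the identity $p_A(m) = p_A(m - a_2) + h(m)$, where $h(m)$ counts $A$-partitions of $m$ with no $a_2$-part, and this yields the decomposition
\[
p_A(w)p_A(z) - p_A(w+z) = \bigl[ p_A(w) p_A(z - a_2) - p_A(w + z - a_2) \bigr] + \bigl[ p_A(w) h(z) - h(w + z) \bigr].
\]
The first bracket is strictly positive: the pair $(w, z-a_2)$ satisfies $w, z-a_2 \geq a_3$ and $w + z - a_2 \in [2a_3, w+z-1]$, so either the base case or the inductive hypothesis (applied to the strictly smaller sum $w+z-a_2$) delivers $p_A(w) p_A(z - a_2) > p_A(w+z-a_2)$.

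It then remains to prove the auxiliary estimate $p_A(w) h(z) \geq h(w+z)$, which I would establish by an explicit injection $\Phi \colon H(w+z) \hookrightarrow P_A(w) \times H(z)$, where $H(m)$ denotes the set of $A$-partitions of $m$ omitting the part $a_2$. For $\mu \in H(w+z)$, write $\mu = (\mu^{\ast}, 1^s)$ with $\mu^{\ast}$ collecting the parts $\geq a_3$. When $s \geq w$, equivalently $|\mu^{\ast}| \leq z$, the map $\mu \mapsto \bigl(1^w,\, (\mu^{\ast}, 1^{z - |\mu^{\ast}|})\bigr)$ yields a pair in $\{1^w\} \times H(z)$, and these images are pairwise distinct because the non-$1$ parts of the second coordinate recover $\mu^{\ast}$, hence $\mu$. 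The delicate case is $s < w$, equivalently $|\mu^{\ast}| > z$, in which $\mu^{\ast}$ must be split between the two coordinates. Here the spacing hypothesis $a_k - a_l \geq a_3$ for $k > l \geq 3$ intervenes: it forces the distinct parts of $\mu^{\ast}$ to be far apart, so $\mu^{\ast}$ has a sparse structure with only a handful of large parts. Using this one locates a sub-multiset $T \subseteq \mu^{\ast}$ with $|T|$ in the admissible window $[z - s,\, z]$, sets $\beta = T \cup 1^{z - |T|}$, and fills $\alpha$ with the leftover parts of $\mu^{\ast}$, padded by $1$'s and, where needed, by $a_2$'s---the latter being permitted precisely because $\alpha \in P_A(w)$ while $\beta \in H(z)$ forbids $a_2$-parts. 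The inequality $2a_2 \leq a_3$ guarantees enough flexibility in $P_A(w)$ for this padding and for distinguishing these images from those produced in the first case.

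The main obstacle is the pathological sub-case in which no sub-multiset of $\mu^{\ast}$ has size in $[z-s, z]$, the archetypal example being $\mu^{\ast} = (a_k)$ for a single large $a_k > z$. Such $\mu$ are rare exactly because of the spacing hypothesis, and they must be handled by mapping $\mu$ to an image whose first coordinate $\alpha$ contains a genuine non-$1$, non-$a_2$ part (for instance an $a_3$-part, or a controlled block of $a_2$'s) so that the image avoids the bulk part of $\Phi$'s range. Verifying that this ad hoc branch of $\Phi$ is systematic and that the resulting map is globally injective is the most delicate part of the argument. Once the sub-lemma $p_A(w) h(z) \geq h(w+z)$ is secured, combining it with the strict positivity of the first bracket in the decomposition above gives $p_A(w)p_A(z) > p_A(w+z)$ and closes the induction.
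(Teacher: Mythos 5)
Your overall architecture coincides with the paper's: strong induction on $w+z$ with the given base window, combined with the key sub-lemma that the product of a full count and an $a_2$-free count dominates the $a_2$-free count of the sum. The paper states this lemma (its Theorem 3.1) as $p_A(w|\text{ no }a_2\text{'s})\,p_A(z)\geqslant p_A(w+z|\text{ no }a_2\text{'s})$ for $w\geqslant a_3+1$ and $z\geqslant 2a_2$, and peels $a_2$ off the larger variable in the induction step; your decomposition is the same one with the roles of the two coordinates exchanged, and the bookkeeping for your first bracket (that $(w,z-a_2)$ falls under the base case or the induction hypothesis) is correct.

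The genuine gap is that this sub-lemma is the entire technical content of the theorem, and you do not prove it. You handle the branch $s\geqslant w$ (where all parts $\geqslant a_3$ fit into the $z$-coordinate), but for $s<w$ you only describe intentions and explicitly leave the construction and its global injectivity open. That unresolved part is exactly where the paper works: it fixes a canonical cut index $i(\lambda)=\max\{j:\lambda_j+\cdots+\lambda_{t+s}\geqslant z\}$, splits the part $\lambda_i=x+y$ across the two coordinates, and then distinguishes seven cases according to $y=0$ or $y\geqslant1$, the size of $s$, whether $\lambda_i=a_3$ or $\lambda_i>a_3$, and whether $a_3\mid y$, re-encoding the amounts $x$, $y$, $s$ as blocks of the form $a_3^{\lfloor y/a_3\rfloor}1^{y-a_3\lfloor y/a_3\rfloor}$ and $a_2^{\lfloor s/a_2\rfloor}1^{\cdots}$. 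Injectivity within a case uses the spacing hypothesis $a_k-a_l\geqslant a_3$ to recover a split part uniquely, and disjointness of the seven images is controlled by counting the $1$'s and $a_2$'s appearing in each coordinate. Note also that your ``pathological sub-case'' (a single part exceeding $z$) is not rare: it is the generic situation treated in the paper's Cases 6 and 7. As written, your proposal is a correct reduction to an unproven lemma, so the argument is incomplete.
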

The usefulness of the above theorem is highlighted in the sequel. In particular, we apply it to deal with the Bessenrodt-Ono inequality for the power partition function. Finally, it is worth noting that a combinatorial reasoning plays an important role in the proof of Theorem \ref{Theorem: General B-O}.

This manuscript is organized as follows. In Section 2, we introduce necessary notation and conventions. The proof of Theorem \ref{Theorem: General B-O} is presented in Section 3. Section 4 and Section 5 deal with the Bessenrodt-Ono type inequalities for the $m$-ary partition function and the power partition function, respectively. Finally, Section 6 contains some additional applications of Theorem \ref{Theorem: General B-O} and concluding remarks.

\section{Preliminaries}

At first, we fix some notation. By  $\mathbb{N}$, $\mathbb{N}_{+}$ and $\mathbb{N}_{\geqslant k}$, we mean the set of non-negative integers, the set of positive integers and the set of positive integers greater or equal than $k$, respectively. We do not repeat the definition of the $A$-partition function, because it might be found in Introduction. Throughout the paper, we also use the following conventions. We put $P_A(n):=\{\lambda:\lambda\text{ is an }A\text{-partition of } n\}$ and define the extended $A$-partition function as
\begin{align*}
    p_A(\bm{\lambda}):=\prod_{i=1}^jp_A(\lambda_i)\hspace{0.2cm}\text{ for an arbitrary $A$-partition }\hspace{0.2cm}\bm{\lambda}=(\lambda_1,\lambda_2,\ldots,\lambda_j).
\end{align*}
We also set
\begin{align*}
    \max{p_A(n)}:=\max{\left\{p_A(\bm{\lambda}):\bm{\lambda}\text{ is an }A\text{-partition of } n\right\}}.
\end{align*}
 
Moreover, $p_A(n| \text{ condition }X)$ and $P_A(n| \text{ condition }Y)$ denote the number of $A$-partitions of $n$ that satisfy the condition $X$ and the set of $A$-partitions of $n$ that fulfill the condition $Y$, respectively. Further, the Cartesian product of two sets of partitions, let say, $U$ and $V$ is written as $U\oplus V$, i.e.
\begin{align*}
    U\oplus V=\{\left(\lambda;\mu\right):\lambda\in U, \mu\in V\}.
\end{align*}
Finally, it should be pointed out that exponents appearing in a partition refer to multiplicities, e.g. $(5^3,2^2,1)=(5,5,5,2,2,1)$.

Now, we are ready to proceed to the main part of the paper.

\section{The proof of Theorem \ref{Theorem: General B-O}}

Before we delve into the proof of Theorem \ref{Theorem: General B-O}, we need to show an auxiliary result which allows us to reduce the Bessenrodt-Ono inequality problem for $p_A(n)$ to a finite task.

\begin{thm}\label{Theorem: Injection}
     Let $A=\{1,a_2,a_3,\ldots\}$ be an arbitrary (finite or infinite) set of positive integers with $1<a_2<a_3<\cdots$ such that $a_k-a_l\geqslant a_3$ for every $k>l\geqslant3$. Then, the inequality  
    \begin{align*}
        p_A(w|\text{ no }a_2\text{'s})p_A(z)\geqslant p_A(w+z|\text{ no } a_2\text{'s})
    \end{align*}
    holds for all positive integers $w\geqslant a_3+1$ and $z\geqslant 2a_2$.
\end{thm}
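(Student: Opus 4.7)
My plan is to prove Theorem~\ref{Theorem: Injection} by constructing an explicit injection
\[
\Phi\colon P_A(w+z\mid\text{no }a_2\text{'s})\ \hookrightarrow\ P_A(w\mid\text{no }a_2\text{'s})\oplus P_A(z).
\]
I partition the source $P_A(w+z\mid\text{no }a_2\text{'s})$ into a \emph{clean} class (those $\lambda$ for which some sub-multiset $S\subseteq\lambda$ has part-sum equal to $w$) and its complement, the \emph{non-clean} class. For a $\lambda$ in the clean class, I fix a canonical such $S$ (for instance by a greedy rule picking the largest parts first and topping up with $1$'s) and set $\mu:=S$, $\nu:=\lambda\setminus S$; since $\lambda$ has no $a_2$'s, neither does $\nu$, and injectivity on this class is automatic because $\mu\cup\nu=\lambda$ recovers $\lambda$ uniquely as a multiset union.

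For a non-clean $\lambda$, I would write $\lambda=(b_1,\ldots,b_r,1^{n_1})$ in weakly decreasing order (so $b_i\geq a_3$), let $s$ be the greedy-prefix index $\sigma_s:=b_1+\cdots+b_s\leq w<\sigma_{s+1}$, and set $g:=w-\sigma_s$. The non-clean condition forces $g>n_1$, $b_{s+1}\geq a_3$, and $b_{s+1}>g$. I then define
\[
\mu := (b_1,\ldots,b_s,1^g),\qquad \nu := (b_{s+2},\ldots,b_r,\,a_2^q,\,1^o)
\]
with $qa_2+o=b_{s+1}-g+n_1$ and $0\leq o<a_2$, treating the $1^g$ in $\mu$ as units obtained from breaking $b_{s+1}$. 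A direct sum-check confirms that $\nu$ sums to $z$. The $a_2$-marker $q\geq 1$ is secured by the hypothesis $z\geq 2a_2$: when $r=s+1$ the residual $b_{s+1}-g+n_1$ equals $z\geq 2a_2$, yielding $q\geq 2$; when $r\geq s+2$ and the one-break residual is $<a_2$, I additionally break $b_{s+2}\geq a_3>a_2$, forcing $q\geq 1$. By construction the non-clean images have $\nu$ containing an $a_2$, whereas the clean-class images do not, so the two sub-images are automatically disjoint.

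The main obstacle---and the heart of the proof---is verifying injectivity of $\Phi$ on the non-clean class. From an image $(\mu,\nu)$ one reads off $b_1,\ldots,b_s,\,g$ from $\mu$ and $b_{s+2},\ldots,b_r$ together with the small sum $qa_2+o$ from $\nu$; the remaining task is to recover $b_{s+1}$ and $n_1$ from the constraints $b_{s+1}\in A$, $b_{s+1}>g$, $b_{s+2}\leq b_{s+1}\leq b_s$ (with appropriate boundary conventions at $s=0$ or $r=s+1$), and the sum identity $b_{s+1}+n_1=g+qa_2+o$. The gap hypothesis $a_k-a_l\geq a_3$ for $k>l\geq 3$ is decisive here: it forces the $A$-points in the admissible interval for $b_{s+1}$ to be sparse, so that with a carefully chosen canonical encoding---e.g., using $q$ to index the admissible values of $b_{s+1}$ within the allowed range, and distinguishing the one-break and two-break subcases by a readable criterion on $qa_2+o$---one can pin down $b_{s+1}$ uniquely, and $n_1$ then follows from the sum identity. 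The condition $w\geq a_3+1$ ensures the greedy step always has enough room in $\mu$, while $z\geq 2a_2$ underwrites the $a_2$-marker in every subcase.
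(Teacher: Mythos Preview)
Your overall strategy---build an injection $P_A(w+z\mid\text{no }a_2\text{'s})\to P_A(w\mid\text{no }a_2\text{'s})\oplus P_A(z)$ and use the part $a_2$ in the second coordinate as a marker separating a ``trivial'' split from the rest---is the same as the paper's. But the heart of the argument, injectivity on your non-clean class, is not actually carried out, and the encoding you \emph{do} write down is not injective. Take $A=\{1,2,5,10,15,20,\dots\}$ (so $a_2=2$, $a_3=5$, and $a_k-a_l\geqslant5$ for $k>l\geqslant3$), $w=z=12$, and consider
\[
\lambda=(15,1^9)\quad\text{and}\quad\lambda'=(20,1^4)
\]
in $P_A(24\mid\text{no }2\text{'s})$. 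Both are non-clean, both have $s=0$, $r=1$, $g=12$, and both yield $\mu=(1^{12})$ and $qa_2+o=b_1-g+n_1=z=12$, hence $\nu=(2^6)$. So $\Phi(\lambda)=\Phi(\lambda')$. The point is that from $(\mu,\nu)$ you recover only the \emph{sum} $b_{s+1}+n_1=g+qa_2+o$, and the admissible interval for $b_{s+1}$ (namely $(\max(g,qa_2+o),\,g+qa_2+o]$) has length $\min(g,qa_2+o)$, which can easily exceed $a_3$ and therefore contain several elements of $A$; the gap hypothesis alone does not save you. You recognise this and propose to let $q$ ``index the admissible values of $b_{s+1}$'', but that scheme is never specified, and you would still need to check that there are enough values of $q$ available (while keeping $q\geqslant1$ to stay disjoint from the clean images, and while handling the two-break subcase consistently). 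As written, this is the missing idea, not a routine verification.

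There is also a smaller issue in your clean class: the ``greedy prefix plus top-up by $1$'s'' rule need not hit $w$ even when \emph{some} sub-multiset does (e.g.\ $\lambda=(20,7,1,1,1)$ with $w=10$ in an appropriate $A$), so you must pick a different canonical subset; this is fixable, but it should be said.

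For comparison, the paper's injection is organised differently. It splits $\lambda=(\lambda_1,\dots,\lambda_t,1^s)$ at the \emph{tail} index $i=\max\{j:\lambda_j+\cdots\geqslant z\}$, writes $\lambda_i=x+y$ so that the tail from position $i$ contributes exactly $x$ to $z$, and then runs through seven explicit cases (depending on whether $y=0$, on the size of $s$, on whether $\lambda_i=a_3$ or $\lambda_i>a_3$, and on $a_3\mid y$). The encoding uses $a_3$ as well as $a_2$: extra $a_3$'s in the $w$-coordinate record $\lfloor y/a_3\rfloor$, while the number of $1$'s there records $y\bmod a_3$, and the gap hypothesis is invoked only to show that the single ``lost'' part can be reconstructed from a residue that is forced to be $<a_3$. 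This finer bookkeeping is precisely what makes the injectivity go through; your proposal would need a comparable refinement before it becomes a proof.
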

\begin{proof}
    Let $a_2,a_3,w,z$ be as in the statement. Similarly to Alanazi, Gagola and Munagi \cite{AGM}, we put 
    \begin{align*}
        i=i(\lambda):=\max{\{j\in\mathbb{N}_+: \lambda_j+\lambda_{j+1}+\cdots+\lambda_{t+s}\geqslant z\}},
    \end{align*}
    where $\lambda_1\geqslant\cdots\geqslant\lambda_{t}\geqslant a_3$ and $\lambda=\left(\lambda_1,\lambda_2,\ldots,\lambda_{t+s}\right)=\left(\lambda_1,\lambda_2,\ldots,\lambda_{t},1^s\right)\in P_A(w+z|\text{ no }a_2\text{'s})$ is arbitrary. We also assume that $\lambda_i=x+y$, where $x+\lambda_{i+1}+\cdots+\lambda_{t+s}=z$ and $\lambda_1+\cdots+\lambda_{i-1}+y=w$. Hence, it follows from the definition of $i$ that $x\geqslant1$ and $y\geqslant0$.
    
    Now, our main aim is to construct an injective map 
    \begin{align*}
        f:P_A(w+z|\text{ no }a_2\text{'s})\to P_A(w|\text{ no }a_2\text{'s})\oplus P_A(z).
    \end{align*}
    In order to do that, let us assume that $\lambda=\left(\lambda_1,\lambda_2,\ldots,\lambda_t,1^s\right)\in P_A(w+z|\text{ no }a_2\text{'s})$. We construct the map in a few steps depending on $x,y$ and $s$. For the sake of convenience, we label each of the cases by its general assumptions.

     \begin{case}[$y=0 \text{ and } 0\leqslant s\leqslant z$]\label{Case 1}
    In this case, we simply put
    \begin{align*}
        f(\lambda)=
            (\lambda_1,\ldots,\lambda_{i-1};\lambda_i,\ldots,\lambda_t,1^s).
    \end{align*}
    It is clear that if $f(\lambda)=f(\mu)$ in this situation, then $\lambda=\mu$.
\end{case}

\begin{case}[$y=0 \text{ and } z< s\leqslant a_3+z$]\label{Case 2}
    Under these assumptions, we set
    \begin{align*}
        f(\lambda)=(\lambda_1,\ldots,\lambda_{t-1},1^{s-z+\lambda_{t}};1^z).
    \end{align*}
    At first, let us notice that $\lambda_{t}$ exists as a consequence of the fact that $s-z<a_3+1\leqslant w$. Thus, it follows that $\lambda_{t}\geqslant a_3$. Now, we suppose that for the partitions $\lambda=\left(\lambda_1,\lambda_2,\ldots,\lambda_{t+s}\right)=\left(\lambda_1,\lambda_2,\ldots,\lambda_{t},1^s\right)$ and $\mu=\left(\mu_1,\mu_2,\ldots,\mu_{\Tilde{t}+\Tilde{s}}\right)=\left(\mu_1,\mu_2,\ldots,\mu_{\Tilde{t}},1^{\Tilde{s}}\right)$ (which fulfill the conditions $y,\Tilde{y}=0$ and $z< s,\Tilde{s}\leqslant a_3+z$, where a tilde indicates corresponding parameters for the partition $\mu$) the following equality holds:
    \begin{align*}
        f(\lambda)=(\lambda_1,\ldots,\lambda_{t-1},1^{s-z+\lambda_{t}};1^z)=(\mu_1,\ldots,\mu_{\Tilde{t}-1},1^{\Tilde{s}-z+\mu_{\Tilde{t}}};1^z)=f(\mu).
    \end{align*}
    If that is the case, then $t=\Tilde{t}$ and $\lambda_m=\mu_{m}$ for every $1\leqslant m\leqslant t-1$. Moreover, we have that
    \begin{align*}
        s-z+\lambda_t=\Tilde{s}-z+\mu_t.
    \end{align*}
    Without loss of generality we can assume that $a_k=\lambda_t\geqslant\mu_t=a_l\geqslant a_3$. Thus, we get 
    \begin{align*}
        a_k-a_l=\lambda_t-\mu_t=(\Tilde{s}-z)-(s-z)<a_3.
    \end{align*}
    Now, it follows from the assumptions in the statement of Theorem \ref{Theorem: Injection} that $\lambda_t=\mu_t$. Hence, $s=\Tilde{s}$ and $\lambda=\mu$, as required.

    It should be also pointed out that Case \ref{Case 2} is separate from Case \ref{Case 1}. Here, notice that $1$ appears more than $z$ times as a part in $f(\lambda)$ what is impossible in Case \ref{Case 1}.
\end{case}

\begin{case}[$y=0 \text{ and } s> a_3+z$]\label{Case 3}
    In this situation, we define $f(\lambda)$ as
    \begin{align*}
        f(\lambda)=(\lambda_1,\ldots,\lambda_{t},1^{s-z};a_2,1^{z-a_2}).
    \end{align*}
    It is straightforward to see that this case is separate from both Case \ref{Case 1} and Case \ref{Case 2} (because of the part $a_2$), and is injective.
\end{case}

\begin{case}[$y\geqslant1 \text{ and } \lambda_{i-1}=\lambda_i=a_3$]\label{Case 4}
    Now, we set 
    \begin{align*}
        f(\lambda)=(\lambda_1,\ldots,\lambda_{i-2},1^{y+a_3};\lambda_{i+1},\ldots,\lambda_t,a_2^{\floor{\frac{x+s}{a_2}}},1^{x+s-a_2\floor{\frac{x+s}{a_2}}}).
    \end{align*}
    At first, let us notice that $\lambda_{i-1}$ exists as a consequence of $y\leqslant a_3-1<w$. Further, we suppose that 
\begin{align*}
    f(\lambda)&=(\lambda_1,\ldots,\lambda_{i(\lambda)-2},1^{y+a_3};\lambda_{i(\lambda)+1},\ldots,\lambda_t,a_2^{\floor{\frac{x+s}{a_2}}},1^{x+s-a_2\floor{\frac{x+s}{a_2}}})\\
    &=(\mu_1,\ldots,\mu_{i(\mu)-2},1^{\Tilde{y}+a_3};\mu_{i(\mu)+1},\ldots,\mu_{\Tilde{t}},a_2^{\floor{\frac{\Tilde{x}+\Tilde{s}}{a_2}}},1^{\Tilde{x}+\Tilde{s}-a_2\floor{\frac{\Tilde{x}+\Tilde{s}}{a_2}}})=f(\mu)
\end{align*}
for some partitions $\mu$ and $\lambda$, where a tilde indicates corresponding parameters for $\mu$. It follows that $i=i(\lambda)=i(\mu)$, $t=\Tilde{t}$ and $y=\Tilde{y}$. Since $y=\Tilde{y}$ and $\lambda_i=\mu_i=a_3$, we also have that $x=\Tilde{x}$. Hence, we must show $s=\Tilde{s}$. But the equality $f(\lambda)=f(\mu)$ asserts that
\begin{align*}
    \floor{\frac{x+s}{a_2}}=\floor{\frac{\Tilde{x}+\Tilde{s}}{a_2}}\hspace{0.2cm}\text{and}\hspace{0.2cm}x+s+a_2\floor{\frac{x+s}{a_2}}=\Tilde{x}+\Tilde{s}+a_2\floor{\frac{\Tilde{x}+\Tilde{s}}{a_2}},
\end{align*}
and thus the required property follows.

It is clear that Case \ref{Case 4} is separate from Case \ref{Case 1}. To see that for other cases, let us notice that $z\geqslant2a_2$. Hence, $1$ appears at least $a_2$ times as a part of $z$ in $f(\lambda)$ in both Case \ref{Case 2} and Case \ref{Case 3}, what is impossible here.
\end{case}

\begin{case}[$y\geqslant1 \text{ and } \lambda_{i-1}>\lambda_i=a_3$]\label{Case 5}
    Here, we define $f(\lambda)$ as
\begin{align*}
    f(\lambda)=(\lambda_1,\ldots,\lambda_{i-1},1^y;\lambda_{i+1},\ldots,\lambda_t,a_2^{\floor{\frac{s}{a_2}}},1^{x+s-a_2\floor{\frac{s}{a_2}}}).
\end{align*}
    By the similar argument as in Case \ref{Case 4}, one can show that if $f(\lambda)=f(\mu)$ in this case, then $\lambda=\mu$. The details are left to the reader. 

    In order to prove that this case is separate from the previous ones, we can notice that $1$ appears at least once and at most $a_3-1$ times as a part of $w$ in $f(\lambda)$ here, what is impossible in the prior cases.
\end{case}

\begin{case}[$y\geqslant 1, \lambda_i>a_3 \text{ and } a_3\nmid y$]\label{Case 6}
    Now, we put
    \begin{align*}
        f(\lambda)=(\lambda_1,\ldots,\lambda_{i-1},a_3^{\floor{\frac{y}{a_3}}},1^{y-a_3\floor{\frac{y}{a_3}}};\lambda_{i+1},\ldots,\lambda_t,a_2^{\floor{\frac{s}{a_2}}},1^{x+s-a_2\floor{\frac{s}{a_2}}}).
    \end{align*}
    At first, let us suppose that $f(\lambda)=f(\mu)$:
    \begin{align*}
        &(\lambda_1,\ldots,\lambda_{i(\lambda)-1},a_3^{\floor{\frac{y}{a_3}}},1^{y-a_3\floor{\frac{y}{a_3}}};\lambda_{i(\lambda)+1},\ldots,\lambda_t,a_2^{\floor{\frac{s}{a_2}}},1^{x+s-a_2\floor{\frac{s}{a_2}}})\\
        =&(\mu_1,\ldots,\mu_{i(\mu)-1},a_3^{\floor{\frac{\Tilde{y}}{a_3}}},1^{\tilde{y}-a_3\floor{\frac{\Tilde{y}}{a_3}}};\mu_{i(\mu)+1},\ldots,\mu_{\Tilde{t}},a_2^{\floor{\frac{\Tilde{s}}{a_2}}},1^{\Tilde{x}+\Tilde{s}-a_2\floor{\frac{\Tilde{s}}{a_2}}}),
    \end{align*}
    where a tilde denotes corresponding values for the partition $\mu$. Since $\lambda_{i(\lambda)},\mu_{i(\mu)}\geqslant a_4$, we need to have that $i=i(\lambda)=i(\mu)$, $t=\Tilde{t}$ and $\lambda_j=\mu_j$ for all $1\leqslant j\leqslant t$ apart (possibly) $j=i$. Moreover, it also follows that $y=\Tilde{y}$, because of
    \begin{align*}
        \floor{\frac{y}{a_3}}=\floor{\frac{\Tilde{y}}{a_3}}\hspace{0.2cm}\text{and}\hspace{0.2cm} y-a_3\floor{\frac{y}{a_3}}=\Tilde{y}-a_3\floor{\frac{\Tilde{y}}{a_3}}.
    \end{align*}
    Further, the equality $f(\lambda)=f(\mu)$ asserts that the following
    \begin{align*}
        s=ka_2+j_1,\hspace{0.2cm} \Tilde{s}=ka_2+j_2\hspace{0.2cm} \text{and}\hspace{0.2cm}x+j_1=\Tilde{x}+j_2
    \end{align*}
    are true for some non-negative integers $j_1,j_2$ and $k$ with $0\leqslant j_1,j_2\leqslant a_2-1$. Without loss of generality, we may assume that $\lambda_i=a_l\geqslant a_m=\mu_i\geqslant a_4$. Thus, it implies the followings
    \begin{align*}
        \lambda_i+j_1=x+y+j_1=\Tilde{x}+y+j_2=\mu_i+j_2 \hspace{0.2cm}\text{and}\hspace{0.2cm} \lambda_i-\mu_i=j_2-j_1<a_2<a_3.
    \end{align*}
    By the assumptions from the statement, we conclude that $x=\Tilde{x}$ and $s=\Tilde{s}$, as required.  

    At this point, we need to show that Case \ref{Case 6} is separate from all of the aforementioned cases. Using the same argument as in Case \ref{Case 5}, one can easily see that Case \ref{Case 6} is separate from cases \ref{Case 1}--\ref{Case 4}. To see that it has no element in common with Case \ref{Case 5}, it is enough to repeat the argument from the previous paragraph. 
\end{case}

\begin{case}[$y\geqslant 1, \lambda_i>a_3 \text{ and } a_3\mid y$]\label{Case 7}
    Here, we set
    \begin{align*}
        f(\lambda)=(\lambda_1,\ldots,\lambda_{i-1},a_3^{\frac{y}{a_3}-1},1^{a_3};\lambda_{i+1},\ldots,\lambda_t,a_2^{\floor{\frac{s}{a_2}}},1^{x+s-a_2\floor{\frac{s}{a_2}}}).
    \end{align*}
    To show that $f(\lambda)=f(\mu)$ implies $\lambda=\mu$ in this case, it is enough to exhibit an analogous reasoning to that one from Case \ref{Case 6}.

 Case \ref{Case 7} is also separate from the others. Observe that $1$ appears exactly $a_3$ times as a part of $w$ in $f(\lambda)$ here, what is impossible elsewhere.
\end{case}

Finally, we conclude that
 \begin{align*}
        p_A(w|\text{ no }a_2\text{'s})p_A(z)\geqslant p_A(w+z|\text{ no } a_2\text{'s}),
    \end{align*}
    as required.
\end{proof}

\begin{re}{\rm
For $A=\{a_i: i\in\mathbb{N}_+\}$ with $a_1<a_{2}<a_3<\cdots$, it seems that one needs to demand that $a_1=1$ in Theorem \ref{Theorem: Injection}. Nevertheless, let us suppose that $a_1>1$, $a_1|a_2$ and $a_1|a_3$. Dividing $A$ by $a_1$ leads us to $\Tilde{A}=\{\Tilde{a}_i:i\in\mathbb{N}_+\}=\{a_i/a_1:i\in\mathbb{N}_+\}$, where $\Tilde{a}_1=1$ and $\Tilde{a}_2,\Tilde{a}_3\in\mathbb{N}_{\geqslant2}$. Since we only demand that $a_1, a_2$ and $a_3$ are integers in the proof of Theorem \ref{Theorem: Injection}, we may apply that result to the set $\Tilde{A}$ if the condition $\Tilde{a}_k-\Tilde{a}_l\geqslant\Tilde{a}_3$ holds for every $k>l\geqslant3$.
 }   
\end{re}

At the first glance, it may be not clear how one can apply Theorem \ref{Theorem: Injection} to obtain some result concerning the Bessenrodt-Ono type inequality. However, the idea is uncomplicated, as the succeeding proof exhibits. 

\begin{proof}[Proof of Theorem \ref{Theorem: General B-O}]
    Let $w,z\geqslant a_3$. We prove the statement by induction on $w+z$. The correctness of the claim for $2a_3\leqslant w+z\leqslant 2(a_3+a_2)-1$ follows directly from the assumptions. Thus, let us suppose that it is true for every $2a_3\leqslant w+z\leqslant s$ for some $s\geqslant2(a_3+a_2)-1$, and examine the validity of the statement for $w+z=s+1$. Without loss of generality we may require that $w\geqslant z\geqslant a_3$. Since $w+z\geqslant2(a_3+a_2)$ and $w\geqslant z$, we have that $w\geqslant a_3+a_2$. Therefore, it follows that $w-a_2\geqslant a_3$ and
    \begin{align*}
    p_A(w+z)&=p_A(w+z-a_2)+p_A(w+z|\text{ no } a_2\text{'s})\\
    &<p_A(w-a_2)p_A(z)+p_A(w+z|\text{ no } a_2\text{'s})\\
    &\leqslant p_A(w-a_2)p_A(z)+ p_A(w|\text{ no }a_2\text{'s})p_A(z)\\
    &=(p_A(w-a_2)+p_A(w|\text{ no }a_2\text{'s}))p_A(z)=p_A(w)p_A(z),
\end{align*}
where the first inequality is a consequence of the induction hypothesis while the second one is a direct application of Theorem \ref{Theorem: Injection} and the assumption that $2a_2\leqslant a_3$. This completes the proof by the law of induction.
\end{proof}

In the forthcoming sections, we illustrate that both Theorem \ref{Theorem: General B-O} and Theorem \ref{Theorem: Injection} might be applied for various $A$-partition functions. However, it is visible and quite unfortunate that we can not use them to solve the Bessenrodt-Ono inequality for the classical partition function $p(n)$.

\section{The Bessenrodt-Ono inequality for the $m$-ary partition function}

Throughout this section, we assume that $A=\{m^i:i\in\mathbb{N}\}$, where $m\geqslant2$ is an integer. In other words, we consider the well-known $m$-ary partition function $b_m(n)=p_A(n)$ defined in Introduction. Our first aim is to examine the Bessenrodt-Ono type inequality for $b_m(n)$ of the form
\begin{align*}
    b_m(w)b_m(z)>b_m(w+z),
\end{align*}
where $w$ and $z$ are arbitrary positive integers. We solve this problem in a few steps depending on these two parameters.

At first, let us require that $w<m$ and $z=km+l$ for some non-negative integers $k$ and $l$ with $0\leqslant l \leqslant m-1$. Since $b_m(xm+y)=b_m(xm)$ for every non-negative integer $x$ and $0\leqslant y\leqslant m-1$, it follows that
\begin{align*}
    b_m(w)b_m(z)=b_m(w)b_m(km)\leqslant b_m(km+l+w)=b_m(w+z)
\end{align*}
with the equality whenever $0\leqslant w+l \leqslant m-1$.

Now, let us suppose that $m\leqslant w\leqslant 2m-1$. In this case, we have the following.
\begin{lm}\label{Lemma 3.1 w=2m+i z>5m^2}
    Let $m\leqslant w\leqslant 2m-1$ and $z=km+l$ for some positive integer $k$ and $0\leqslant l \leqslant m-1$. If $k\geqslant5m$, then
    \begin{align*}
        b_m(w)b_m(z)>b_m(w+z).
    \end{align*}
\end{lm}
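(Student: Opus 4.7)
The condition $m \leqslant w \leqslant 2m - 1$ forces the only $m$-ary partitions of $w$ to be $(m, 1^{w - m})$ and $(1^w)$, since any allowed part $m^i$ with $i \geqslant 2$ already exceeds $w$; hence $b_m(w) = 2$. Combined with the identity $b_m(xm + y) = b_m(xm)$ recalled just before the lemma, the inequality becomes $2\, b_m(km) > b_m(w + z)$. Writing $w = m + i$ with $0 \leqslant i \leqslant m - 1$, the value $w + z$ equals either $(k + 1)m + (i + l)$ when $i + l \leqslant m - 1$ or $(k + 2)m + (i + l - m)$ when $i + l \geqslant m$, so the task splits into the two subclaims
\begin{equation*}
b_m\bigl((k + 1)m\bigr) < 2\, b_m(km) \quad \text{and} \quad b_m\bigl((k + 2)m\bigr) < 2\, b_m(km).
\end{equation*}

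The workhorse for both is the classical recurrence $b_m(nm) = b_m\bigl((n - 1)m\bigr) + b_m(n)$ for $n \geqslant 1$, which iterates to the summation formula $b_m(nm) = \sum_{i = 0}^n b_m(i)$. Applying the recurrence once (respectively, twice) on the left-hand sides simplifies the subclaims to
\begin{equation*}
b_m(k + 1) < b_m(km) \quad \text{and} \quad b_m(k + 1) + b_m(k + 2) < b_m(km).
\end{equation*}
The first follows at once from the summation formula: if $m \nmid k + 1$ then $b_m(k + 1) = b_m(k)$ is a single summand of $\sum_{i = 0}^k b_m(i)$, while if $m \mid k + 1$ then $b_m(k + 1) = \sum_{i = 0}^{(k + 1)/m} b_m(i)$ is a proper initial sub-sum, since $(k + 1)/m < k$ whenever $k \geqslant 5m \geqslant 2$.

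The substantive step is the second subclaim. Setting $q := \lfloor (k + 2)/m \rfloor$, the summation formula and $b_m(k + 1) \leqslant b_m(k + 2) = b_m(qm) = \sum_{i = 0}^q b_m(i)$ reduce the claim to the stronger inequality $\sum_{i = q + 1}^k b_m(i) > \sum_{i = 0}^q b_m(i)$. Monotonicity of $b_m$ bounds the left side below by $(k - q)\, b_m(q + 1)$ and the right side above by $(q + 1)\, b_m(q + 1)$, so $k - q > q + 1$ is sufficient; together with $q \leqslant (k + 2)/m$ the hypothesis $k \geqslant 5m$ yields this comfortably for every $m \geqslant 3$. The main obstacle is the edge case $m = 2$, where $q$ is almost $k/2$ and the naive monotonicity bound is too crude. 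There I would either sharpen the estimate of $\sum_{i = 0}^q b_m(i) = b_m(2q)$ by iterating $b_m(2q) = b_m(2q - 2) + b_m(q)$ and telescoping, or run a short induction on $k \geqslant 10$ whose step rewrites $b_m\bigl((k + 1)m\bigr) - 2\, b_m(k + 3) = \bigl(b_m(km) - 2\, b_m(k + 2)\bigr) + b_m(k + 1) - 2\bigl(b_m(k + 3) - b_m(k + 2)\bigr)$, the last parenthesis vanishing unless $m \mid k + 3$ and otherwise equal to $b_m((k + 3)/m)$, which is dwarfed by $b_m(k + 1)$ for $k \geqslant 10$.
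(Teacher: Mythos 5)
Your reduction is the same as the paper's: both proofs use $b_m(w)=2$, $b_m(z)=b_m(km)$, $b_m(w+z)\leqslant b_m((k+2)m)$ and the recurrence $b_m(xm)=b_m((x-1)m)+b_m(x)$ to arrive at the key inequality $b_m(km)>b_m(k+1)+b_m(k+2)$. Where you diverge is in finishing it: the paper writes $k=dm+j$ with $d\geqslant 5$, unrolls the recurrence five more steps to get $b_m(km)=b_m(k)+b_m(k-1)+\cdots+b_m(k-4)+b_m((k-5)m)$, and matches terms against the bound $b_m(k+2)+b_m(k+1)\leqslant b_m(k)+b_m(k-1)+b_m(d)+3b_m(d+1)$, which works uniformly for all $m\geqslant 2$ and explains the hypothesis $k\geqslant 5m$. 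You instead invoke the closed form $b_m(nm)=\sum_{i=0}^{n}b_m(i)$ and a term-counting bound $k-q>q+1$; this is arguably cleaner for $m\geqslant 3$ but, as you note, collapses at $m=2$. That branch is the only place your argument is not actually carried out, and it is the genuinely delicate one; fortunately your first suggested repair does close it: with $q=\lfloor(k+2)/2\rfloor$ one has $k\geqslant 2q-2$, hence $b_2(k)\geqslant b_2(2q-2)$, and since $b_2(2q)=b_2(2q-2)+b_2(q)$ the required inequality $\sum_{i=q+1}^{k}b_2(i)>\sum_{i=0}^{q}b_2(i)=b_2(2q)$ follows from $b_2(k-1)\geqslant b_2(q)$ together with strict positivity of the remaining summands (valid once $k\geqslant q+2$, amply covered by $k\geqslant 10$). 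I would recommend writing that paragraph out rather than leaving two alternative sketches; as it stands the $m=2$ case is a real, if small, gap in the exposition, not in the idea.
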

\begin{proof}
    Let us assume that $k,l,w$ and $z$ are as in the statement. In this proof, we use two well-known identities:
    \begin{align}\label{Identity (1) b_m(n)}
        b_m(xm+y)=b_m(xm)
    \end{align}
    and
    \begin{align}\label{Identity (2) b_m(n)}
        b_m(xm)=b_m((x-1)m)+b_m(x),
    \end{align}
    which are valid for all positive integers $x$ and $y$ with $0\leqslant y <m$. Thus, it turns out that
    \begin{align*}
        b_m(w+z)=b_m(km+l+w)\leqslant b_m((k+2)m)=b_m(k+2)+b_m(k+1)+b_m(km)
    \end{align*}
    and
    \begin{align*}
        b_m(w)b_m(z)=2b_m(km).
    \end{align*}
    Hence, it is enough to solve the inequality
    \begin{align*}
        b_m(z)=b_m(km)>b_m(k+2)+b_m(k+1).
    \end{align*}
    Let us assume that $k=dm+j$ for some non-negative integers $d$ and $j$ such that $d\geqslant5$ and $0\leqslant j<m$. Applying both (\ref{Identity (1) b_m(n)}) and (\ref{Identity (2) b_m(n)}), we get that
    \begin{align*}
        b_m(z)=b_m(k)+b_m(k-1)+b_m(k-2)+b_m(k-3)+b_m(k-4)+b_m((k-5)m)
    \end{align*}
    and
    \begin{align*}
        b_m(k+2)+b_m(k+1)&\leqslant 2b_m(k+1)+b_m(d+1)\leqslant2b_m(k)+3b_m(d+1)\\
        &\leqslant b_m(k)+b_m(k-1)+b_m(d)+3b_m(d+1).
    \end{align*} 
    In consequence, it suffices to show that
    \begin{align*}
        b_m(k-2)+b_m(k-3)+b_m(k-4)+b_m((k-5)m)>b_m(d)+3b_m(d+1)
    \end{align*}
    Since $b_m((k-5)m)>b_m(k-5)$, it is enough to examine the validity of $k-5\geqslant d$, but it is an easy exercise.
\end{proof}

Lemma \ref{Lemma 3.1 w=2m+i z>5m^2} allows us to restrict our consideration to the case when $m\leqslant z< 5m^2$. Since we still have infinitely many possibilities to choose $m$ and $z$, an additional auxiliary lemma is needed.

\begin{lm}\label{Lemma 3.2}
    Let $m\geqslant4$, $m\leqslant w\leqslant 2m-1$ and $z=km+l$ for some integers $k$ and $l$ with $1\leqslant k\leqslant 5m-1$ and $0\leqslant l<m$.  For $k=1$, we have that
    \begin{align*}
        b_m(w)b_m(z)\geqslant b_m(w+z)
    \end{align*}
    with the equality whenever $w+z\geqslant3m$.
    
    If $k\geqslant2$ and $m\geqslant6$, then 
    \begin{align*}
        b_m(w)b_m(z)>b_m(w+z).
    \end{align*}
\end{lm}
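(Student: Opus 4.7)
The plan is to exploit the two identities (\ref{Identity (1) b_m(n)}) and (\ref{Identity (2) b_m(n)}) from the proof of the previous lemma in order to normalize both sides. Since $m\leqslant w\leqslant 2m-1$, (\ref{Identity (1) b_m(n)}) gives $b_m(w)=b_m(m)=2$, and similarly $b_m(z)=b_m(km)$, so that $b_m(w)b_m(z)=2\,b_m(km)$. For the right-hand side, writing $w+l=qm+r$ with $q\in\{1,2\}$ and $0\leqslant r<m$ turns $w+z$ into $(k+q)m+r$, hence $b_m(w+z)=b_m((k+q)m)$; applying (\ref{Identity (2) b_m(n)}) once or twice then gives
\begin{equation*}
    b_m(w+z)=\begin{cases}b_m(km)+b_m(k+1),&q=1,\\ b_m(km)+b_m(k+1)+b_m(k+2),&q=2.\end{cases}
\end{equation*}
After cancelling one copy of $b_m(km)$, the claim reduces to $b_m(km)>b_m(k+1)$ when $q=1$ and to $b_m(km)>b_m(k+1)+b_m(k+2)$ when $q=2$, with $\geqslant$ replacing $>$ in the $k=1$ row.

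For $k=1$ this is pure arithmetic: $b_m(m)=2$, and the hypothesis $m\geqslant 4$ forces $b_m(2)=b_m(3)=1$, so $q=1$ yields $2>1$ (strict) while $q=2$ yields $2=1+1$ (equality). Since $q=2$ is equivalent to $w+l\geqslant 2m$, i.e.\ to $w+z\geqslant 3m$, the equality case matches the statement exactly.

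The heart of the proof is therefore the strict bound
\begin{equation*}
    b_m(km)>b_m(k+1)+b_m(k+2)\qquad(2\leqslant k\leqslant 5m-1,\ m\geqslant 6),
\end{equation*}
which implies both reduced inequalities simultaneously. I would prove it by induction on $k$, based on the telescoping formula $b_m(km)=1+\sum_{j=1}^{k}b_m(j)$ obtained by iterating (\ref{Identity (2) b_m(n)}). The base case $k=2$ is immediate: $b_m(2m)=3$, while $b_m(3)+b_m(4)=2$ thanks to $m\geqslant 6$. For the inductive step from $k$ to $k+1$, applying $b_m((k+1)m)=b_m(km)+b_m(k+1)$ to the induction hypothesis reduces the task to the auxiliary estimate $2b_m(k+1)\geqslant b_m(k+3)$. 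Since $b_m$ is constant on each block $[qm,(q+1)m-1]$ and jumps at $qm$ by $b_m(q)$, one has $b_m(k+3)-b_m(k+1)\in\{0,b_m(j)\}$ for a unique $j\in\{1,\ldots,5\}$ (determined by where $k$ sits modulo $m$), and the telescoping formula instantly gives $b_m((j-1)m)\geqslant b_m(j)$, finishing the step.

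The main obstacle I anticipate is the bookkeeping in this last claim: one has to track which multiple of $m$ (if any) lies in $(k+1,k+3]$ and handle the boundary values $b_m(m-1)$, $b_m(m)$ carefully. A more mechanical alternative that sidesteps induction is a block-by-block verification: the range $2\leqslant k\leqslant 5m-1$ splits into at most five blocks, on each of which $b_m(k+1)$ and $b_m(k+2)$ are essentially constant, so the inequality $1+\sum_{j=1}^{k}b_m(j)>b_m(k+1)+b_m(k+2)$ becomes an explicit comparison linear in $k$, which is straightforward to verify on each block once the explicit values of $b_m(j)$ there are substituted.
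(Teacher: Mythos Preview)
Your argument is correct and shares the paper's initial reduction: both start from $b_m(w)b_m(z)=2b_m(km)$ and $b_m(w+z)\leqslant b_m((k+2)m)=b_m(km)+b_m(k+1)+b_m(k+2)$, so the substantive content is the inequality $b_m(km)>b_m(k+1)+b_m(k+2)$ for $k\geqslant 2$ and $m\geqslant 6$. Where you diverge is in how you establish this last bound. The paper simply observes that $k+2\leqslant 5m+1<m^2$, so only the parts $1$ and $m$ can occur in an $m$-ary partition of $k+1$ or $k+2$; this yields the crude estimates $b_m(k+1)+b_m(k+2)<(2k+3)/m+2$ and $b_m(km)\geqslant k+1$, and the resulting inequality $k+1>(2k+3)/m+2$ is a one-line algebraic check valid for $k\geqslant 3$, with $k=1,2$ handled by inspection. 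Your inductive route via the auxiliary estimate $2b_m(k+1)\geqslant b_m(k+3)$ is equally valid and arguably more structural---it never appeals to the explicit shape of $b_m$ on a block beyond the jump formula---though the paper's approach is a little shorter once one spots the counting bound. Your treatment of the $k=1$ case, distinguishing $q=1$ from $q=2$ to pin down exactly when equality occurs, is also more explicit than the paper's, which simply declares $k=1,2$ to be easily checked.
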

\begin{proof}
    Let us assume that all the parameters are as in the claim. One can easily check that the statement is true for $k=1,2$. Therefore, let us demand that $k\geqslant3$. It follows that
    \begin{align*}
        b_m(w)b_m(z)=2b_m(km)
    \end{align*}
    and
    \begin{align*}
        b_m(w+z)\leqslant b_m((k+2)m)=b_m(k+2)+b_m(k+1)+b_m(km).
    \end{align*}
    Since $m\geqslant6$, we obtain that $k+1<k+2\leqslant 5m+1<m^2$. Hence, it is not difficult to observe that
    \begin{align*}
        b_m(k+2)+b_m(k+1)<\frac{k+2}{m}+1+\frac{k+1}{m}+1 \hspace{0.2cm}\text{and}\hspace{0.2cm} b_m(km)\geqslant k+1.
    \end{align*}
    Thus, it suffices to show that the following
    \begin{align*}
        k+1>\frac{k+2}{m}+\frac{k+1}{m}+2 
    \end{align*}
    is true, which is an elementary exercise.
\end{proof}

Now, we can check all the exceptions in the Bessenrodt-Ono inequality for $m\leqslant 5$, $m\leqslant w<2m$ and $m\leqslant z<5m^2$ one by one. However, we omit that task here, because the complete list of the exceptions will be given later (see, Table 1). In the meantime, we proceed to the next case. 

At this point, we require that both $w,z\geqslant2m$. In such a setting the following property is true.

\begin{lm}\label{Lemma 3.3 (w,z>2m)}
    Let $m\geqslant4$ be fixed. For every $w,z\geqslant 2m$ with $w+z\leqslant 2m^2$, we have
    \begin{align*}
        b_m(w)b_m(z)>b_m(w+z).
    \end{align*}
\end{lm}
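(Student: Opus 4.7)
The plan is to reduce the inequality to a statement about the single-variable sequence $c_k := b_m(km)$, compute this sequence explicitly in the relevant range, and verify the resulting inequality by a short case analysis.

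For the reduction, since $w, z \geqslant 2m$, I write $w = am + r$ and $z = bm + s$ with $a, b \geqslant 2$ and $0 \leqslant r, s < m$. The identity $b_m(xm+y) = b_m(xm)$ (already used in Lemma \ref{Lemma 3.1 w=2m+i z>5m^2}) gives $b_m(w) = c_a$ and $b_m(z) = c_b$, while $b_m(w+z)$ equals $c_{a+b}$ or $c_{a+b+1}$ depending on whether $r + s < m$. Since the sequence $(c_k)$ is strictly increasing, it suffices to prove
\begin{align*}
    c_a c_b > c_{a+b+1}
\end{align*}
for all $a, b \geqslant 2$ with $a + b + 1 \leqslant 2m$, together with the companion inequality $c_a c_b > c_{2m}$ in the boundary case $a + b = 2m$ (for which $w + z \leqslant 2m^2$ forces $r = s = 0$).

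Next I tabulate $(c_k)$ on $0 \leqslant k \leqslant 2m$ from the recurrences $c_k = c_{k-1} + b_m(k)$ and $b_m(k) = c_{\lfloor k/m \rfloor}$. A short induction yields $c_k = k + 1$ for $0 \leqslant k \leqslant m - 1$, then $c_{m+j} = m + 2 + 2j$ for $0 \leqslant j \leqslant m - 1$, and finally $c_{2m} = 3m + 3$. Armed with these explicit values, the desired inequality splits into three ranges for $a + b + 1$: when $a + b + 1 \leqslant m - 1$, it reduces to $ab \geqslant 4$; when $a + b + 1 = m$ (possible only for $m \geqslant 5$), the worst case $a = 2$ gives $3(m - 2) > m + 2$; and when $m + 1 \leqslant a + b + 1 \leqslant 2m$, I distinguish whether $\max(a, b) \leqslant m - 1$ (so $c_a c_b = (a+1)(b+1)$) or $\max(a, b) \geqslant m$ (so $c_b = 2b - m + 2$), and in each sub-range minimise in $a$ to obtain an elementary linear inequality of the form $j(m - 2) > 2$ or $2m + j > 8$. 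The companion inequality $c_a c_b > c_{2m}$ for $a + b = 2m$ is tightest at $a = 2$ and reduces to $9m - 6 > 3m + 3$.

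The main obstacle is keeping the case analysis tidy: the transition at $\max(a, b) = m - 1$ is where the closed form for $c_b$ changes, and the smallest value $m = 4$ together with pairs $(a, b)$ close to $(2, m - 1)$ or $(2, 2m - 3)$ produce the narrowest gaps. These are precisely the situations in which the hypothesis $m \geqslant 4$ and the exclusion $a = 1$ (guaranteed by $w \geqslant 2m$) cannot be relaxed; everywhere else the estimate has substantial slack.
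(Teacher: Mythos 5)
Your proposal is correct and takes essentially the same route as the paper: both proofs reduce the claim via the identities $b_m(xm+y)=b_m(xm)$ and $b_m(xm)=b_m((x-1)m)+b_m(x)$ to an inequality among the values $c_k=b_m(km)$ with $k\leqslant 2m$, the only difference being that you evaluate $c_k$ in closed form ($k+1$, then $m+2+2(k-m)$, then $3m+3$) and check exact inequalities, whereas the paper uses the cruder bounds $b_m(dm)\geqslant d+1$ and $b_m(d+j)\leqslant 2$ together with the two special cases $c=d=2$ and $w+z=2m^2$. One tiny quibble: the tightest instance is actually $(a,b)=(2,2)$ with $m=4$, where $c_2^2=9$ against $c_5=8$, rather than the pairs near $(2,m-1)$ you single out; this does not affect the validity of the argument.
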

\begin{proof}
    Let us write $w=cm+j$ and $z=dm+i$ for some positive integers $d\geqslant c\geqslant2$ and $0\leqslant i,j<m$. At first, we consider the case when $c=d=2$. Since $m\geqslant4$, it follows that
    \begin{align*}
        b_m(2m+i)b_m(2m+j)=9
    \end{align*}
    and
    \begin{align*}
        b_m(4m+i+j)\leqslant b_m(5m)\leqslant b_4(20)=8.
    \end{align*}
    Therefore, we can assume that $d\geqslant 3$. Let us deal with the case when $w+z<2m^2$. We have that $c\leqslant m-1$, and it is clear that 
    \begin{align*}
        b_m(w)b_m(z)=(c+1)b_m(dm).
    \end{align*}
    On the other hand, the identities \eqref{Identity (1) b_m(n)} and \eqref{Identity (2) b_m(n)} assert that
    \begin{align*}
        b_m(w+z)\leqslant b_m((c+d+1)m)=b_m(d+c+1)+\cdots+b_m(d+1)+b_m(dm)
    \end{align*}
    for $c+d<2m-1$. Analogously, we have that
    \begin{align*}
        b_m(w+z)\leqslant b_m((c+d)m)=b_m(d+c)+\cdots+b_m(d+1)+b_m(dm)
    \end{align*}
    for $c+d=2m-1$. Hence, it is transparent that the following inequalities
\begin{align*}
    cb_m(dm)\geqslant c(d+1)\geqslant4c>2(c+1),
\end{align*}
    hold, as required.

    Next, let us suppose that $w+z=2m^2$. If $w=z=m^2$, then
    \begin{align*}
        b_m(m^2)b_m(m^2)=(m+2)^2>3m+3=b_m(2m^2).
    \end{align*}
    Therefore, we consider the case when $w<m^2$ and $z>m^2$. We get that
    \begin{align*}
        b_m(cm+j)b_m(dm+i)=(c+1)(2d-m+2)\geqslant(c+1)(m+2),
    \end{align*}
    where the equality follows from the fact that we can either take $m^2$ as a part of $z$ or not. Finally, we have
    \begin{align*}
        b_m(w)b_m(z)\geqslant(c+1)(m+2)\geqslant3(m+2)>3m+3=b_m(2m^2).
    \end{align*}
    This completes the proof.
\end{proof}

Now, let us exhibit all the exceptions in the Bessenrodt-Ono inequality for $b_m(n)$ which are not covered by Lemmas \ref{Lemma 3.1 w=2m+i z>5m^2}--\ref{Lemma 3.3 (w,z>2m)}. We gather them all in Table 1.

\begin{center}
\begin{table}
\begin{tabular}{ |c|c| } 
\hline
$m$ & Pairs $(w,z)$ with $m\leqslant w\leqslant z$ and $b_m(w)b_m(z)\leqslant b_m(w+z)$\\
 \hline\hline 
 $2$ & \textcolor{blue}{$(2,2)$}, \textcolor{blue}{$(2,3)$}, $(3,3)$, $(3,5)$, $(3,7)$, \textcolor{blue}{$(3,9)$} \\ \hline
 $3$ & $(4,5)$, $(4,8)$, $(5,5)$, $(5,7)$, $(5,8)$, \textcolor{blue}{$(7,8)$}, \textcolor{blue}{$(8,8)$}  \\ \hline
 $4$ & \textcolor{blue}{$(5,11)$}, \textcolor{blue}{$(5,15)$}, \textcolor{blue}{$(6,10)$}, \textcolor{blue}{$(6,11)$}, \textcolor{blue}{$(6,14)$}, \textcolor{blue}{$(6,15)$},\\
  \phantom{} & \textcolor{blue}{$(7,9)$}, \textcolor{blue}{$(7,10)$}, \textcolor{blue}{$(7,11)$}, \textcolor{blue}{$(7,13)$}, \textcolor{blue}{$(7,14)$}, \textcolor{blue}{$(7,15)$}   \\
 \hline
\end{tabular}
\caption{All the pairs $(w,z)$ with $m\leqslant w\leqslant z$ such that $b_m(w)b_m(z)\leqslant b_m(w+z)$, which are not covered by Lemmas \ref{Lemma 3.1 w=2m+i z>5m^2}--\ref{Lemma 3.3 (w,z>2m)}. The blue color indicates that both sides of the inequality are equal.}
\end{table}
\end{center}

After all of the discussion above, we are finally ready to completely solve the Bessenrodt-Ono inequality for the $m$-ary partition function.

\begin{thm}\label{Theorem: B-O b_m(n)}
    Let $m\geqslant2$ be fixed. For all positive integers $w,z\geqslant m$ such that $w+z\geqslant n_m$, we have that
    \begin{align*}
        b_m(w)b_m(z)>b_m(w+z),
    \end{align*}
    where the values $n_m$ are collected in Table 2.
    \begin{center}
\begin{table}
\begin{tabular}{ |c|c|c|c|c| } 
\hline
$m$ & $2$ & $3$ & $4$ & $\geqslant5$\\
 \hline
 $n_m$ & $13$ & $17$ & $23$ & $4m-1$ \\ \hline
\end{tabular}
\caption{The values of $n_m$ which imply that the inequality $b_m(w)b_m(z)>b_m(w+z)$ holds for all $w,z\geqslant m$ with $w+z\geqslant n_m$.}
\end{table}
\end{center}
\end{thm}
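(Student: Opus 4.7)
The plan is to assemble Lemmas \ref{Lemma 3.1 w=2m+i z>5m^2}--\ref{Lemma 3.3 (w,z>2m)} together with Theorem \ref{Theorem: General B-O} so as to establish the inequality for all $(w,z)$ with $m\leqslant w\leqslant z$ lying outside a finite exceptional set, to show that this set is exactly what Table 1 records, and then to read off $n_m$ in Table 2 as the smallest integer strictly exceeding the largest $w+z$ appearing in the row for $m$. Assume WLOG $m\leqslant w\leqslant z$ throughout.

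In Region A ($m\leqslant w<2m$) I would dispatch $z\geqslant 5m^{2}$ via Lemma \ref{Lemma 3.1 w=2m+i z>5m^2}, and $z<5m^{2}$ via Lemma \ref{Lemma 3.2} in the regime where it yields \emph{strict} inequality --- namely $k:=\lfloor z/m\rfloor\geqslant 2$ with $m\geqslant 6$, or $k=1$ with $w+z<3m$. The surviving pairs (the $k=1$, $w+z\geqslant 3m$ equality pairs, each forced to satisfy $w+z\leqslant 4m-2$, together with the $k\geqslant 2$ pairs for $m\in\{2,3,4,5\}$) form a finite set; an exhaustive search produces the rows of Table 1 with $w<2m$.

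In Region B ($2m\leqslant w\leqslant z$) I would split at $w+z=2m^{2}$. For $w+z\leqslant 2m^{2}$, Lemma \ref{Lemma 3.3 (w,z>2m)} handles $m\geqslant 4$, and the cases $m\in\{2,3\}$ are settled by direct computation contributing the remaining entries of Table 1. For $w+z>2m^{2}$ with $w,z\geqslant m^{2}$ I would apply Theorem \ref{Theorem: General B-O} to $A=\{m^{i}:i\in\mathbb{N}\}$, so that $a_{2}=m$ and $a_{3}=m^{2}$: the hypotheses $a_{k}-a_{l}=m^{k}-m^{l}\geqslant m^{3}(m-1)\geqslant m^{2}$ for $k>l\geqslant 3$, and $2m\leqslant m^{2}$, are immediate. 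The base case of Theorem \ref{Theorem: General B-O} demands $b_{m}(w')b_{m}(z')>b_{m}(w'+z')$ on the finite box $w',z'\in[m^{2},m^{2}+2m-1]$ with $w'+z'\in[2m^{2},2m^{2}+2m-1]$; using the identities \eqref{Identity (1) b_m(n)}--\eqref{Identity (2) b_m(n)} this reduces to the comparisons $(m+2)^{2}$, $(m+2)(m+4)$, $(m+4)^{2}$ against $b_{m}(2m^{2})=3m+3$ and $b_{m}(2m^{2}+m)=3m+6$ (for $m\geqslant 3$), with $m=2$ dispatched by direct verification of eight pairs.

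The only residual sub-range is $2m\leqslant w<m^{2}\leqslant z$ with $w+z>2m^{2}$, which is nonempty only for $m\geqslant 3$. Here I would close the gap by strong induction on $w+z$, imitating the inductive step of Theorem \ref{Theorem: General B-O}. Decomposing
\begin{align*}
b_{m}(w+z)=b_{m}(w+z-m)+b_{m}(w+z\mid\text{no }m\text{'s}),
\end{align*}
the first summand is bounded by the induction hypothesis (or by Lemma \ref{Lemma 3.3 (w,z>2m)} whenever $w+z-m\leqslant 2m^{2}$), and the second by Theorem \ref{Theorem: Injection} applied with the two arguments interchanged, so that $z\geqslant m^{2}+1$ plays the role of ``$w$'' and $w\geqslant 2m=2a_{2}$ plays the role of ``$z$''; summing yields $b_{m}(w+z)<b_{m}(w)[b_{m}(z-m)+b_{m}(z\mid\text{no }m\text{'s})]=b_{m}(w)b_{m}(z)$. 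The main obstacle is the exhaustive finite case-check that produces Table 1 --- especially for $m=4$, where on the order of $m^{2}$ small pairs must be inspected --- after which Table 2 is read off by taking $n_{m}$ to be one more than the maximum $w+z$ in the relevant row.
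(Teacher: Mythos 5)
Your proposal is correct and follows essentially the same route as the paper: Lemmas \ref{Lemma 3.1 w=2m+i z>5m^2}--\ref{Lemma 3.3 (w,z>2m)} plus a finite check yield Table 1 and the values $n_m$, and the range $w+z>2m^2$ is closed by induction on $w+z$ via the decomposition $b_m(w+z)=b_m(w+z-m)+b_m(w+z\mid\text{no }m\text{'s})$ together with Theorem \ref{Theorem: Injection} applied with the larger argument in the ``no $a_2$'s'' slot. Routing the sub-case $w,z\geqslant m^2$ through Theorem \ref{Theorem: General B-O} is only a repackaging, since that theorem's proof is the very same induction; like the paper, you leave the $m\in\{2,3,4\}$ details (where a few equality pairs such as $(8,8)$ for $m=3$ require slight extra care in the inductive step) to direct verification.
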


\begin{proof}
Let us assume that $m\geqslant5$. If at least one of the parameter $w$ or $z$ is smaller than $2m$, then the required property follows from Lemma \ref{Lemma 3.1 w=2m+i z>5m^2} and Lemma \ref{Lemma 3.2} (and some numerical computations for $m=5$). If we have that $w,z\geqslant2m$ with $w+z\leqslant2m^2$ then the required property is a consequence of Lemma \ref{Lemma 3.3 (w,z>2m)}. Let us further proceed by the induction on $w+z$. Without loss of generality we demand that $w\geqslant z$. Since the appropriate inequality holds for every $4m\leqslant w+z\leqslant N$ for some $N\geqslant 2m^2$, let us check the correctness of the statement for $w+z=N+1$. We have that $w\geqslant m^2+1$ and
\begin{align*}
    b_m(w+z)&=b_m(w+z-m)+b_m(w+z|\text{ no }m\text{'s})\\
    &<b_m(w-m)b_m(z)+b_m(w|\text{ no }m\text{'s})b_m(z)\\
    &=(b_m(w-m)+b_m(w|\text{ no }m\text{'s}))b_m(z)=b_m(w)b_m(z),
\end{align*}
where the inequality is a consequence of both the induction hypothesis (observe that $w+z-m>2m^2-m>4m$ and $w-m\geqslant m^2+1-m>2m$) and Theorem \ref{Theorem: Injection}.

Since the proofs for $m=2,3,4$ are very similar, we omit them here. 
\end{proof}

We see that Theorem \ref{Theorem: Injection} plays a crucial role in the above proof. However, it is worth noting that we do not need that property to deal with the Bessenrodt-Ono inequality for the $m$-ary partition function. In fact, it is suffices to systematically apply both \eqref{Identity (1) b_m(n)} and \eqref{Identity (2) b_m(n)}, as follows.

\begin{proof*}{Sketch of the second proof of Theorem~\ref{Theorem: B-O b_m(n)}}
    Let $m\geqslant 2$, and let us assume that $w\leqslant z$, $w=cm+j$ and $z=dm+i$ for some non-negative integers $c,d,i,j$ with $0\leqslant i,j<m.$
    Using both \eqref{Identity (1) b_m(n)} and \eqref{Identity (2) b_m(n)}, we obtain that
    \begin{align*}
        b_m(w)b_m(z)=b_m(cm)b_m(dm)
    \end{align*}
    and
    \begin{align*}
        b_m(w+z)\leqslant b_m((c+d+1)m)&=b_m(c+d+1)+\cdots+b_m(d+1)+b_m(dm)\\
        &\leqslant (c+1)b_m(2d+1)+b_m(dm).
    \end{align*}
    Hence, it is enough to examine the validity of the following inequality
    \begin{align*}
        (b_m(cm)-1)b_m(dm)>(c+1)b_m(2d+1).
    \end{align*}
    One can simplify it to 
    \begin{align*}
        \begin{cases}
            b_m(cm)-1&>c+1\\
            b_m(dm)&\geqslant b_m(2d+1).
        \end{cases}
    \end{align*}
    Hence, if we demand that $c\geqslant m+1$, then it is not difficult to see that $b_m(cm)\geqslant c+3$. That is a consequence of the fact that we may take $m^2$ as a part. If we do that, then the assumption $c\geqslant m+1$ implies that $m$ might occur as a part at least once. Thus, we get two additional partitions at worst.

    Now, let us observe that for $m\geqslant3$ the inequality $b_m(dm)\geqslant b_m(2d+1)$ is automatically satisfied. For $m=2$, we can just write $b_2(2d)=b_2(2d+1)$.

     Therefore, we have already established that the inequality $b_m(w)b_m(z)>b_m(w+z)$ holds for all $w,z\geqslant m^2+m$. In order to prove the theorem completely, we need to apply some analogues of Lemmas \ref{Lemma 3.1 w=2m+i z>5m^2}--\ref{Lemma 3.3 (w,z>2m)}, which cover all of the remaining possibilities for choosing $w$ and $z$.    
\end{proof*}

We end this section with a direct application of Theorem \ref{Theorem: B-O b_m(n)} to the extended $m$-ary partition function.

\begin{thm}\label{Theorem: Max b_m(n)}
    Let $m\geqslant2$. If $m\geqslant3$, then the maximal value $\max{b_m(n)}$ of the $m$-ary partition function on $B_m(n)$ is attained at the partition
    \begin{align*}
        &\left(m^{\floor{\frac{n}{m}}},1^{n-m\floor{\frac{n}{m}}}\right).
    \end{align*}
    For $m=2$, it is attained at the partitions of the form
    \begin{align*}
        \left(4^i,2^{\floor{\frac{n}{2}}-2i},1^{n-2\flce{\frac{n}{2}}}\right),
    \end{align*}
    where $0\leqslant i \leqslant \floor{\frac{n}{4}}$. 
    
    In particular, we have
    \begin{align*}
        \max{b_m(n)}=2^{\floor{\frac{n}{m}}}
    \end{align*}
    for every $m\geqslant2$.
\end{thm}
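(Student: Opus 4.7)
The plan is to reduce the optimization to partitions with only very small parts via a local ``splitting'' argument based on Theorem~\ref{Theorem: B-O b_m(n)}. Given any $m$-ary partition $\bm{\lambda}$ of $n$ containing a part $m^k$ with $k \geq 2$, I replace that part by $m$ copies of $m^{k-1}$; the result is again an $m$-ary partition of $n$, and its $b_m$-value is multiplied by $b_m(m^{k-1})^m/b_m(m^k)$. The whole proof rests on the following key inequality:
\begin{align*}
b_m(m^{k-1})^m \geq b_m(m^k) \qquad (k \geq 2),
\end{align*}
which I claim is strict whenever $m \geq 3$ (for all $k \geq 2$) or $m = 2$ and $k \geq 3$, with equality in the single exceptional case $(m, k) = (2, 2)$.

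To verify this inequality I treat the base $k = 2$ directly: the $m$-ary partitions of $m^2$ are $(m^2)$ together with $(m^j, 1^{m^2 - jm})$ for $j \in \{0, 1, \ldots, m\}$, so $b_m(m^2) = m + 2$, and the comparison with $b_m(m)^m = 2^m$ is elementary (equality for $m = 2$, strict for $m \geq 3$). For $k \geq 3$, I iterate Theorem~\ref{Theorem: B-O b_m(n)} on the pairs $w = j m^{k-1}$, $z = m^{k-1}$ for $j = 1, \ldots, m - 1$: after $m - 1$ applications this yields $b_m(m^{k-1})^m > b_m(m^k)$. The hypothesis $w, z \geq m$ is automatic since $k \geq 2$, and the sum $w + z \geq 2 m^{k-1}$ exceeds the threshold $n_m$ listed in Table~2 in every case except $(m, k) = (2, 3)$, which is handled by the direct numerical check $b_2(4)^2 = 16 > 10 = b_2(8)$.

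With the splitting inequality established, iterated application shows that the maximum of $b_m(\bm{\lambda})$ is attained on a partition whose parts all lie in $\{1, m\}$ (when $m \geq 3$) or $\{1, 2, 4\}$ (when $m = 2$). In the first case such a partition has the form $(m^a, 1^b)$ with $m a + b = n$ and $b_m$-value $2^a$, maximized at $a = \lfloor n/m \rfloor$, $b = n - m \lfloor n/m \rfloor$. In the second it has the form $(4^i, 2^j, 1^k)$ with $4 i + 2 j + k = n$ and value $2^{2 i + j} = 2^{(n - k)/2}$; maximizing means minimizing $k$ subject to $k \geq 0$ and $k \equiv n \pmod{2}$, forcing $k = n - 2 \lfloor n/2 \rfloor$ and $2 i + j = \lfloor n/2 \rfloor$, and any $0 \leq i \leq \lfloor n/4 \rfloor$ then achieves this value. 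In both cases the maximum equals $2^{\lfloor n/m \rfloor}$.

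The two delicate points are precisely the boundary cases of the key inequality that fall outside the range of Theorem~\ref{Theorem: B-O b_m(n)}: the base $k = 2$ (where one needs the direct enumeration $b_m(m^2) = m + 2$) and $(m, k) = (2, 3)$ (which sits just below $n_2 = 13$). A conceptual subtlety for $m = 2$ is that the splitting inequality holds with \emph{equality} when $(m, k) = (2, 2)$, which is exactly what causes the set of maximizers to form a one-parameter family rather than a single partition.
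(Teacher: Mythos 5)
Your proposal is correct and follows essentially the same route as the paper: both establish the key splitting inequality $b_m(m^{k-1})^m\geqslant b_m(m^k)$ by direct enumeration for small $k$ and by telescoping applications of Theorem~\ref{Theorem: B-O b_m(n)} (peeling off one $m^{k-1}$ at a time) for larger $k$, then reduce to partitions with parts in $\{1,m\}$, respectively $\{1,2,4\}$. Your treatment is in fact slightly more explicit than the paper's about the threshold checks against $n_m$ and about why the $m=2$ maximizers form a one-parameter family.
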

\begin{proof}
    At first, let us notice that $b_m(m)=2$, $b_m(m^2)=m+2$ and $b_m(m^3)=(m^2+4)(m+1)/2-m$. Thus, it is clear that $b_2(2)b_2(2)=b_2(4)$, $b_2(4)b_2(4)>b_2(8)$ and $b_m^m(m^{i-1})>b_m(m^i)$ for $m\geqslant3$ and $i=2,3$. Next, using Theorem \ref{Theorem: B-O b_m(n)} we may write
    \begin{align*}
        b_m(m^k)&=b_m((m-1)m^{k-1}+m^{k-1})<b_m((m-1)m^{k-1})b_m(m^{k-1})\\
        &=b_m((m-2)m^{k-1}+m^{k-1})b_m(m^{k-1})<b_m((m-2)m^{k-1})b_m^2(m^{k-1})\\
        &=\cdots\\
        &=b_m([m-(m-1)]m^{k-1}+m^{k-1})b_m^{m-2}(m^{k-1})<b_m^m(m^{k-1})
    \end{align*}
    for every $k\geqslant4$. Therefore, it follows from the induction that $b_m^m(m^{k-1})>b_m(m^k)$ for all $m\geqslant2$ and $k\geqslant3$. Now, the first part of the statement is clear. The second one, on the other hand, is a straightforward consequence of elementary computations.
\end{proof}

\section{The Bessenrodt-Ono inequality for the power partition function}

In this section, we deal with the Bessenrodt-Ono inequality for the ($d$-th) power partition function $p_d(n):=p_{A_d}(n)$, where $A_d=\{n^d: n\in\mathbb{N}_+\}$ and $d\in\mathbb{N}_{\geqslant 2}$. 

At the beginning, let us observe that $4^2-3^2<3^2$. Thus, we can not simply apply Theorem \ref{Theorem: General B-O} (or Theorem \ref{Theorem: Injection}) to attack the problem in general. On the other hand, if $d\geqslant3$ and $k>l\geqslant3$, then $k^d-l^d>(k-l)dl^{d-1}\geqslant3^d.$ Therefore, it follows that Theorem \ref{Theorem: General B-O} might be used when $d\geqslant3$. For the sake of completeness, we investigate the Bessenrtodt-Ono inequality for $p_{A_2}(n)$ separately by applying the following analogue of Theorem \ref{Theorem: Injection}.
\begin{pr}\label{Theorem: Injection (2)}
    Let $A=\{1,a_2,a_3,\ldots\}$ be an arbitrary (finite or infinite) set of positive integers with $1<a_2<a_3<\cdots$ such that $a_k-a_l\geqslant a_3$ for every $k>l\geqslant4$. Then, the inequality  
    \begin{align*}
        p_A(w|\text{ no }a_2\text{'s})p_A(z)\geqslant p_A(w+z|\text{ no } a_2\text{'s})
    \end{align*}
    holds for all positive integers $w\geqslant a_3+1$ and $z\geqslant 3a_2$.
\end{pr}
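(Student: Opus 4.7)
The plan is to follow the proof of Theorem \ref{Theorem: Injection} line by line, identifying the single step in that proof that truly relied on the stronger gap condition (requiring $l \geqslant 3$), and repairing it by using the extra room provided by $z \geqslant 3a_2$.

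A careful reading of the proof of Theorem \ref{Theorem: Injection} shows that the hypothesis $a_k - a_l \geqslant a_3$ with $l = 3$ was invoked only in Case 2, where one deduces $\lambda_t = \mu_t$ from $|\lambda_t - \mu_t| < a_3$ knowing merely that $\lambda_t, \mu_t \geqslant a_3$. Cases 6 and 7 also invoke the gap condition, but only for parts $\geqslant a_4$, so they carry over verbatim under the weaker hypothesis. Cases 1, 3, 4, 5 never invoke the gap condition at all. Hence only Case 2 needs repair, and all other cases can be imported directly with $z \geqslant 2a_2$ (a fortiori with $z \geqslant 3a_2$).

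I would split Case 2 into two subcases according to $\lambda_t$. When $\lambda_t \geqslant a_4$, I use the original image
\[
f(\lambda) = (\lambda_1, \ldots, \lambda_{t-1}, 1^{s-z+\lambda_t}; 1^z),
\]
whose injectivity now follows from the weaker gap hypothesis applied to $\lambda_t, \mu_t \geqslant a_4$. When $\lambda_t = a_3$, I instead define
\[
f(\lambda) = (\lambda_1, \ldots, \lambda_{t-1}, 1^{s-z+a_3}; a_2^2, 1^{z-2a_2}),
\]
which is well-defined and lies in $P_A(w|\text{no }a_2\text{'s}) \oplus P_A(z)$ precisely because $z \geqslant 3a_2$ guarantees $z - 2a_2 \geqslant a_2$. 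Injectivity inside this subcase is trivial, since $\lambda_t = a_3$ is prescribed and every other datum of $\lambda$ is read off directly from the image.

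It remains to verify disjointness of the new subcase from every other case. The $a_2$-count in the second partition distinguishes it from Case 1 and from the $\lambda_t \geqslant a_4$ subcase (both with zero $a_2$'s) and from Case 3 (exactly one $a_2$). For Cases 4-7, either the second partition inherits non-$a_2$, non-$1$ parts from $\lambda_{i+1}, \ldots, \lambda_t$ that are absent in the new image, or else the number of $1$'s in the second partition is congruent to $s$ or $x+s$ modulo $a_2$ and hence strictly less than $a_2$, whereas in the new subcase it equals $z - 2a_2 \geqslant a_2$. The main obstacle will be this bookkeeping; in particular, when the $a_2$-count in Cases 4-7 happens to equal $2$ and the larger parts $\lambda_{i+1}, \ldots, \lambda_t$ are absent, one must check that the residue of the $1$'s count modulo $a_2$ still forces separation from the new subcase, which is exactly what the choice $z \geqslant 3a_2$ is calibrated to provide.
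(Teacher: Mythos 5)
Your proposal is essentially the paper's own construction, mirrored: the paper modifies the map on the sub-case of Case \ref{Case 2} with $\lambda_t\geqslant a_4$, sending it to $(\lambda_1,\ldots,\lambda_{t-1},1^{s-z+\lambda_t};a_2^2,1^{z-2a_2})$ and keeping $f$ when $\lambda_t=a_3$, whereas you do the reverse; both versions isolate the only sub-case where the stronger gap hypothesis was used, both exploit $z\geqslant 3a_2$ to make the $a_2^2$-image disjoint from Case \ref{Case 3} and from Cases \ref{Case 4}--\ref{Case 7}, and both are correct. One point in your disjointness bookkeeping is stated inaccurately: in Cases \ref{Case 5}--\ref{Case 7} the number of $1$'s in the second component is $x+s-a_2\floor{\frac{s}{a_2}}=x+(s\bmod a_2)$, which is \emph{not} a residue modulo $a_2$ and can equal $z-2a_2$ when $i=t$ and $\floor{\frac{s}{a_2}}=2$, so the second components may genuinely coincide with your new image. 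Separation still holds, but via the first component: in your new sub-case it contains $s-z+a_3\geqslant a_3+1$ ones, while in Cases \ref{Case 5}, \ref{Case 6}, \ref{Case 7} it contains at most $a_3-1$, at most $a_3-1$, and exactly $a_3$ ones, respectively. With that one-line repair your argument is complete and matches the paper's.
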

\begin{proof}
    Let $A,w,z$ be as in the statement. Similarly to the proof of Theorem \ref{Theorem: Injection}, we construct an injective map 
    \begin{align*}
        g:P_A(w+z|\text{ no }a_2\text{'s})\to P_A(w|\text{ no }a_2\text{'s})\oplus P_A(z)
    \end{align*}
by setting

    \begin{align*}
        g(\lambda)=\begin{cases}
            (\lambda_1,\lambda_2,\ldots,\lambda_{t-1},1^{s-z+\lambda_t};a_2^2,1^{z-2a_2}), & \text{ if } y=0, z<s\leqslant z+a_3, \lambda_t\geqslant a_4,\\
            f(\lambda), & \text{ otherwise,}
        \end{cases}
    \end{align*}
    where all the notation comes from the proof of Theorem \ref{Theorem: Injection}. 
    
    It is not difficult to show that the map $g(\lambda)$ is well-defined and injective. 
    The details are left to the reader.
\end{proof}

Now, we are ready to prove the main theorem of this section. 

\begin{thm}\label{Theorem: B-O p_A_d(n)}
    Let $d\geqslant3$ be fixed. For all positive integers $w,z\geqslant 3^d$, we have that
    \begin{align*}
        p_{A_d}(w)p_{A_d}(z)>p_{A_d}(w+z).
    \end{align*}
    If $d=2$, then the above is valid for every $w,z\geqslant12$.
\end{thm}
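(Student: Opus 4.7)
My plan is to split by $d \geq 3$ and $d = 2$, invoking Theorem \ref{Theorem: General B-O} in the first case and replaying its proof with Proposition \ref{Theorem: Injection (2)} in the second.

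For $d \geq 3$, I apply Theorem \ref{Theorem: General B-O} to $A = A_d$, where $a_2 = 2^d$ and $a_3 = 3^d$. The two structural hypotheses are immediate: the estimate $k^d - l^d \geq d \cdot 3^{d-1}(k-l) \geq 3^d$ for $k > l \geq 3$ is already observed in the text, and $2 \cdot 2^d \leq 3^d$ amounts to $(3/2)^d \geq 2$, which holds for $d \geq 2$. What remains is the base-case inequality
\[
p_{A_d}(w)p_{A_d}(z) > p_{A_d}(w+z) \quad \text{for } w, z \geq 3^d \text{ with } 2\cdot 3^d \leq w+z \leq 2(3^d + 2^d) - 1.
\]
The simplifying observation is that for $d \geq 4$ the window satisfies $w+z < 4^d$, so every partition from $P_{A_d}$ counted in this range uses only the parts $1$, $2^d$, and $3^d$; this turns $p_{A_d}(n)$ into the lattice count $\#\{(\alpha, \beta) \in \N^2 : 3^d \alpha + 2^d \beta \leq n\}$, which can be compared explicitly by splitting a partition of $w+z$ according to how many $3^d$-parts each side uses. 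For $d = 3$ the window marginally admits $4^3 = 64$ as a part, so a handful of extra pairs is checked by hand.

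For $d = 2$, the inequality $a_3 - a_2 = 5 < 9$ precludes direct use of Theorem \ref{Theorem: General B-O}; instead I repeat its induction on $w+z$ with Proposition \ref{Theorem: Injection (2)} in place of Theorem \ref{Theorem: Injection}. Its hypothesis $k^2 - l^2 \geq 9$ for $k > l \geq 4$ follows from $2l+1 \geq 9$ when $l \geq 4$. For $w \geq z \geq 12$ and $w \geq 16$, so that the inductive hypothesis applies to the pair $(w-4, z)$,
\[
p_{A_2}(w+z) = p_{A_2}(w+z-4) + p_{A_2}(w+z \mid \text{no }4\text{'s}) < p_{A_2}(w-4)p_{A_2}(z) + p_{A_2}(w \mid \text{no }4\text{'s})p_{A_2}(z) = p_{A_2}(w)p_{A_2}(z),
\]
where the first inequality uses the induction hypothesis and the second is Proposition \ref{Theorem: Injection (2)} (whose hypothesis $z \geq 3a_2 = 12$ is met). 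The finitely many residual pairs---namely $w \in \{12,13,14,15\}$ with $12 \leq z \leq w$, together with the initial window of $w+z$---are treated by direct tabulation of $p_{A_2}$.

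The main obstacle is the finite base-case verification. For $d \geq 3$ the cleanest argument uses the two-parameter lattice count above, but the $d = 3$ corner demands separate attention because $4^3$ sneaks into the window. For $d = 2$ there is no closed form for $p_{A_2}(n)$, so one must rely on explicit numerical tables and check the small exceptional pairs individually. The inductive mechanism itself is essentially a direct transcription of the proof of Theorem \ref{Theorem: General B-O}, and the bulk of the paper's effort will go into the short but careful initial checks.
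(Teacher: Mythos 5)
Your proposal is correct and follows essentially the same route as the paper: for $d\geqslant3$ verify the hypotheses of Theorem \ref{Theorem: General B-O} with $a_2=2^d$, $a_3=3^d$ and settle the finite window $2\cdot3^d\leqslant w+z\leqslant2(3^d+2^d)-1$ (with $d=3$ handled separately because $4^3$ enters the window), and for $d=2$ rerun the induction of Theorem \ref{Theorem: General B-O} using Proposition \ref{Theorem: Injection (2)}. The only cosmetic difference is in the base-case bookkeeping: you count lattice points $3^d\alpha+2^d\beta\leqslant n$ directly, whereas the paper bounds $p_{A_d}(w)p_{A_d}(z)\geqslant\left(1+(3/2)^d\right)^2$ against an explicit upper bound for $p_{A_d}(2(3^d+2^d))$ and reduces to the quadratic $u^2-u-9>0$ in $u=(3/2)^d$ for $d\geqslant4$.
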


\begin{proof}
Let us assume that $d\geqslant3$. At first, we demand that $w,z\geqslant3^d$ and $2\cdot3^d\leqslant w+z\leqslant2(3^d+2^d)-1$. We have that
\begin{align*}
    p_{A_d}(w)p_{A_d}(z)\geqslant \left(1+\frac{w}{2^d}\right)\left(1+\frac{z}{2^d}\right)\geqslant\left(1+\frac{3^d}{2^d}\right)^2.
\end{align*}
On the other hand, one can derive that
\begin{align*}
    p_{A_d}(w+z)\leqslant p_{A_d}(2(3^d+2^d))\leqslant6+\frac{3^d+2^{d+1}}{2^d}+\frac{3^d+2^{d}}{2^{d-1}}=10+\frac{3^d}{2^d}+\frac{3^{d}}{2^{d-1}},
\end{align*}
where the last inequality follows from elementary computations and the observation that $4^d$ might occur as a part of $2(3^d+2^d)$  (for $d=3$). Hence, it suffices to check the validity of 
\begin{align*}
    \left(1+\frac{3^d}{2^d}\right)^2>10+\frac{3^d}{2^d}+\frac{3^{d}}{2^{d-1}},
\end{align*}
or equivalently of
\begin{align*}
    u^2-u-9>0,
\end{align*}
where $u=(3/2)^d$. It is easy to solve the above problem and verify that the required property holds whenever $d\geqslant4$. For $d=3$, one can check one by one that 
\begin{align*}
    p_{A_3}(w)p_{A_3}(z)>p_{A_3}(w+z)
\end{align*}
is satisfied for all $2\cdot3^3\leqslant w+z\leqslant 2(3^3+2^3)-1$, as required. 
Therefore, Theorem \ref{Theorem: General B-O} completes the proof for every $d\geqslant3$.

In order to show the property for $d=2$, one can just exhibit a similar reasoning to the above one and apply Theorem \ref{Theorem: Injection (2)} instead of Theorem \ref{Theorem: General B-O}. The details are left to the reader.
\end{proof}

As a consequence of the Bessenrodt-Ono inequality for the power partition function $p_{A_d}(n)$, we get an analogue of Theorem \ref{Theorem: Max b_m(n)}.

\begin{thm}\label{Theorem: Max p_{A_d}(n)}
    Let $d\geqslant2$. If $d\geqslant3$, then the maximal value $\max{p_{A_d}(n)}$ of the power partition function on $P_{A_d}(n)$ is attained at the partition
    \begin{align*}
        &\left((2^{d})^{\floor{\frac{n}{2^d}}},1^{n-2^d\floor{\frac{n}{2^d}}}\right).
    \end{align*}
    For $d=2$, it is attained at the partitions of the form
    \begin{align*}
        (4^{\floor{\frac{n}{4}}},1^{n-4\floor{\frac{n}{4}}}), \hspace{0.2cm} &\text{if} \hspace{0.2cm} n\equiv 0,1,2,3\pmod*{4},\\
        (9,4^{\floor{\frac{n-9}{4}}},1^{n-4\floor{\frac{n-9}{4}}-9}), \hspace{0.2cm} &\text{if} \hspace{0.2cm} n\equiv1,2,3\pmod*{4} \text{ and }n\geqslant9,\\
        (9^{2},4^{\floor{\frac{n-18}{4}}},1^{n-4\floor{\frac{n-18}{4}}-18}), \hspace{0.2cm} &\text{if} \hspace{0.2cm} n\equiv 2,3\pmod*{4} \text{ and }n\geqslant18,\\
        (9^3,4^{\floor{\frac{n-27}{4}}},1^{n-4\floor{\frac{n-27}{4}}-27}), \hspace{0.2cm} &\text{if} \hspace{0.2cm} n\equiv 3\pmod*{4} \text{ and }n\geqslant27.
    \end{align*}
    
    In particular, we have
    \begin{align*}
        \max{p_{A_d}(n)}=2^{\floor{\frac{n}{2^d}}}
    \end{align*}
    for every $d\geqslant2$.
\end{thm}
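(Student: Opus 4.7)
The plan is to mimic the strategy of the proof of Theorem~\ref{Theorem: Max b_m(n)}: apply the Bessenrodt--Ono inequality (Theorem~\ref{Theorem: B-O p_A_d(n)}) to show that any part $k^d$ with $k \geq 3$ appearing in an $A_d$-partition can be replaced by the sub-partition $((2^d)^{\floor{k^d/2^d}}, 1^{k^d - 2^d\floor{k^d/2^d}})$ of $k^d$ so as to strictly increase the extended product (with a single borderline equality when $d = 2$ and $k = 3$), and then optimise among the restricted partitions that survive. The candidate maximiser $\bm{\lambda}^* = ((2^d)^{\floor{n/2^d}}, 1^{n - 2^d\floor{n/2^d}})$ immediately gives $p_{A_d}(\bm{\lambda}^*) = 2^{\floor{n/2^d}}$, since $p_{A_d}(2^d) = 2$ and $p_{A_d}(1) = 1$, which delivers the ``$\geq$'' half for free.

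The main technical input is the stronger uniform bound $p_{A_d}(n) \leq 2^{\floor{n/2^d}}$ for all $n \geq 2^d$, strict whenever $n \geq 3^d$, which I would prove by strong induction on $n$ (stated here for $d \geq 3$; the $d = 2$ case runs analogously with Proposition~\ref{Theorem: Injection (2)} in place of Theorem~\ref{Theorem: General B-O}). In the base range $2^d \leq n < 2\cdot 3^d$, the inequality $4^d \geq 2\cdot 3^d$ (valid for $d \geq 3$) forces the usable parts to lie in $\{1, 2^d, 3^d\}$, so direct enumeration yields $p_{A_d}(n) = \floor{n/2^d}+1$ when $n < 3^d$ and $p_{A_d}(n) = \floor{n/2^d} + \floor{(n-3^d)/2^d}+2$ when $3^d \leq n < 2\cdot 3^d$; since $\floor{3^d/2^d} \geq 3$, both are easily bounded by $2^{\floor{n/2^d}}$. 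For $n \geq 2\cdot 3^d$, Theorem~\ref{Theorem: B-O p_A_d(n)} applied with $w = 3^d$ and $z = n - 3^d$ (both $\geq 3^d$) gives
\begin{align*}
p_{A_d}(n) < p_{A_d}(3^d)\,p_{A_d}(n - 3^d) \leq 2^{\floor{3^d/2^d} + \floor{(n - 3^d)/2^d}} \leq 2^{\floor{n/2^d}},
\end{align*}
by the base case, the inductive hypothesis, and the subadditivity $\floor{a}+\floor{b} \leq \floor{a+b}$.

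Granted this, any $A_d$-partition containing a part $k^d$ with $k \geq 3$ can be strictly improved (or, for $d=2$, $k=3$, left unchanged) by the replacement above, so the maximum is attained among partitions using only parts in $\{1, 2^d\}$ (when $d \geq 3$) or $\{1, 4, 9\}$ (when $d = 2$). For $d \geq 3$, a $\{1, 2^d\}$-partition with $a$ copies of $2^d$ has extended value $2^a$, uniquely maximised by $a = \floor{n/2^d}$, yielding $\bm{\lambda}^*$; uniqueness comes from the strictness in the Key Lemma. For $d = 2$, writing a $\{1, 4, 9\}$-partition as $(9^j, 4^a, 1^b)$ with $9j + 4a + b = n$ gives extended value $2^{2j+a}$, and an elementary analysis shows $2j + a \leq \floor{n/4}$ with equality exactly when $j \in \{0, 1, \ldots, n \bmod 4\}$, reproducing the four listed families. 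I expect the main obstacle to be the base-case enumeration of the inductive bound, together with the $d = 2$ bookkeeping needed to isolate precisely those four optimising families (as opposed to partitions that merely share the same maximal value of $2j + a$ but fail to be genuine optimisers of the extended product).
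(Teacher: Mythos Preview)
Your approach is correct and essentially the same as the paper's: both use Theorem~\ref{Theorem: B-O p_A_d(n)} to iteratively peel off a fixed chunk and reduce the bound $p_{A_d}(k^d)\leq 2^{\lfloor k^d/2^d\rfloor}$ (strict for $k\geq 3$, $d\geq 3$, borderline at $d=2$, $k=3$) to a finite base-range check, then optimise over the surviving parts $\{1,2^d\}$ (respectively $\{1,4,9\}$). Your Key Lemma is a clean repackaging---bounding $p_{A_d}(n)$ for all $n$ rather than just $n=k^d$---but note one wrinkle: for $d=2$ Theorem~\ref{Theorem: B-O p_A_d(n)} requires $w,z\geq 12$, so you cannot peel off $3^2=9$ in the inductive step; the paper peels off $16$ instead, which is exactly the ``$d=2$ bookkeeping'' you anticipated.
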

\begin{proof}
    At first, let us notice that $p_{A_d}(2^d)=2$ and $p_{A_d}(3^d)=\floor{\frac{3^d}{2^d}}+2$.
    Thus, if we consider the extended power partition function and have a partition $\lambda$ with\linebreak a part, let say, $\lambda_i=3^d$, then we may replace that part by $\floor{\frac{3^d}{2^d}}$ parts equal to $2^d$ and $3^d-2^d\cdot\floor{\frac{3^d}{2^d}}$ ones. It is not difficult to verify that $2^{\floor{\frac{3^d}{2^d}}}\geqslant\floor{\frac{3^d}{2^d}}+2$ for every $d\geqslant2$ (with the equality if and only if $d=2$). Therefore, we see that after such\linebreak a transformation the value of the extended power partition function grows for $d\geqslant3$ or remains the same if $d=2$. 

    Now, let us demand that $d\geqslant3$ and $n\geqslant4$. Theorem \ref{Theorem: B-O p_A_d(n)} asserts that
    \begin{align*}
        p_{A_d}(n^d)&=p_{A_d}(n^d-3^d+3^d)<p_{A_d}(n^d-3^d)p_{A_d}(3^d)\\
        &=p_{A_d}(n^d-2\cdot3^d+3^d)p_{A_d}(3^d)<p_{A_d}(n^d-2\cdot3^d)p_{A_d}^2(3^d)\\
        &=\cdots\\
        &=p_{A_d}\left(n^d-3^d\left(\floor{\frac{n^d}{3^d}}-1\right)+3^d\right)p_{A_d}^{\floor{\frac{n^d}{3^d}}-2}(3^d)\\
        &<p_{A_d}\left(n^d-3^d\left(\floor{\frac{n^d}{3^d}}-1\right)\right)p_{A_d}^{\floor{\frac{n^d}{3^d}}-1}(3^d).
    \end{align*}
Next, we can put 
\begin{align*}
    w:=n^d-3^d\left(\floor{\frac{n^d}{3^d}}-1\right), 
\end{align*}
and observe that $3^d\leqslant w\leqslant2\cdot3^d-1$. Hence, it turns out that
\begin{align*}
    p_{A_d}(w)=\floor{\frac{w}{2^d}}+\floor{\frac{w-3^d}{2^d}}+2<2\left(\floor{\frac{w}{2^d}}+1\right).
\end{align*}
However, it is easy to check that the inequality
\begin{align*}
    2^{\floor{\frac{w}{2^d}}}\geqslant2\left(\floor{\frac{w}{2^d}}+1\right)
\end{align*}
holds for every $d\geqslant3$. Hence, we get that $p_{A_d}(w)<p_{A_d}^{\floor{\frac{w}{2^d}}}(2^d)$ and
\begin{align*}
    p_{A_d}(n^d)<p_{A_d}^{\floor{\frac{w}{2^d}}}(2^d)p_{A_d}^{\floor{\frac{n^d}{3^d}}-1}(3^d).
\end{align*}
 The commentary from the beginning completes the proof for $d\geqslant3$.

 Finally, we deal with the case when $d=2$. The first few sentences of the proof ensure that $$p_{A_2}(\bm{(4,4)})=p_{A_2}(\bm{(4,4,1)})=p_{A_2}(\bm{(9)})=4.$$
 Furthermore, the similar equality holds if the part $9$ occurs two or three times. More precisely, we have
 \begin{align*}
     p_{A_2}(\bm{(4^4)})=p_{A_2}(\bm{(4^4,1^2)})=p_{A_2}(\bm{(9^2)})=2^4
     \end{align*}
     and
     \begin{align*}
p_{A_2}(\bm{(4^6)})=p_{A_2}(\bm{(4^6,1^3)})=p_{A_2}(\bm{(9^3)})=2^6.
 \end{align*}
 However, the part $9$ can not occur more than three times. Indeed, if there are four nines, then
 \begin{align*}
     p_{A_2}(\bm{(4^9)})=2^9>p_{A_2}(\bm{(9^4)})=2^8.
 \end{align*}
Now, let us suppose that $n=4s$ for some positive integer $s\geqslant3$. It follows that
\begin{align*}
    p_{A_2}(\bm{(4^s)})=2^s>2^{s-1}=p_{A_2}(\bm{(9,4^{s-3},1^3)}).
\end{align*}
 Further, if $n=4s+1$ for some $s\geqslant2$, then
 \begin{align*}
    p_{A_2}(\bm{(4^s,1)})=p_{A_2}(\bm{(9,4^{s-2})})=2^s>2^{s-1}\geqslant p_{A_2}(\bm{(9^2,4^{s-5},1^{3})}).
\end{align*}
One can make similar computations for $n\equiv2,3\pmod*{4}$ to deduce that the partitions from the statement might be the appropriate candidates for the maximal value $\max{p_{A_2}(n)}$ of the ($2$nd) power partition function on $P_{A_2}(n)$.

Next, let us assume that $n$ is arbitrary and $\lambda$ is a partition of $n$ with at least one part equals $16$. Observe that
$p_{A_2}(\bm{(16)})=8<16=p_{A_2}(\bm{(4^4)})$. Hence, if we replace the part $16$ by four parts equal $4$, then the value of the extended power partition function (for $d=2$) becomes larger. Similarly, one can easily check that $p_{A_2}(\bm{(25)})=19<32=p_{A_2}(\bm{(16,4^2,1)})<64=p_{A_2}(\bm{(4^6,1)})$. Thus, let us suppose that $l\geqslant 6$ and $l^2$ occurs as a part of a partition of $n$. Once again, Theorem \ref{Theorem: B-O p_A_d(n)} ensures that 
\begin{align*}
    p_{A_2}(l^2)&=p_{A_2}(l^2-4^2+4^2)<p_{A_2}(l^2-4^2)p_{A_2}(4^2)\\
    &=p_{A_2}(l^2-2\cdot4^2+4^2)p_{A_2}(4^2)<p_{A_2}(l^2-2\cdot4^2)p_{A_2}^2(4^2)\\
    &=\cdots\\
    &=p_{A_2}\left(l^2-4^2\cdot\left(\floor{\frac{l^2}{4^2}}-1\right)+4^2\right)p_{A_2}^{\floor{\frac{l^2}{4^2}}-2}(4^2)\\
    &<p_{A_2}\left(l^2-4^2\cdot\left(\floor{\frac{l^2}{4^2}}-1\right)\right)p_{A_2}^{\floor{\frac{l^2}{4^2}}-1}(4^2).
\end{align*}
Now, one can make some numerical calculations to check the values of $p_{A_2}(n)$ and to verify the validity of the statement for every $1\leqslant n\leqslant31$. If we do so, then it follows that
\begin{align*}
    p_{A_2}(l^2)<p_{A_2}^{\floor{\frac{l^2}{2^2}}}(2^2),
\end{align*}
as required. This completes the proof.
\end{proof}

\section{Concluding Remarks}

At the end of this paper, we present two additional examples in order to illustrate the usefulness of both Theorem \ref{Theorem: General B-O} (\ref{Theorem: Injection}) and Proposition \ref{Theorem: Injection (2)} and encourage the reader to investigate the Bessenrodt-Ono type inequalities for other $A$-partition functions. 

The first instance reefers to the well-known Fibonacci numbers. More precisely, let us set $A=F:=\{1,2,3,5,8,13,\ldots\}=\{F_n:n\geqslant2\}$ and examine the multiplicative property for the Fibonacci partition function $p_F(n)$.
\begin{thm}\label{Theorem Fib}
    Let $F=\{F_n:n\geqslant2\}$. For every $a,b\geqslant6$, we have that
    $$p_F(a)p_F(b)>p_F(a+b).$$
\end{thm}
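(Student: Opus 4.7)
The plan is to mirror the inductive argument of Theorem \ref{Theorem: General B-O}, but using Proposition \ref{Theorem: Injection (2)} in place of Theorem \ref{Theorem: Injection}, since the Fibonacci sequence does not satisfy the stronger hypothesis of the latter. Writing $F = \{1, 2, 3, 5, 8, 13, \ldots\}$ as $\{1, a_2, a_3, a_4, \ldots\}$ with $a_2 = 2$, $a_3 = 3$, $a_4 = 5$, we see that $a_4 - a_3 = 2 < 3 = a_3$, so Theorem \ref{Theorem: Injection} is not available. Proposition \ref{Theorem: Injection (2)}, however, only requires $a_k - a_l \geq a_3$ for $k > l \geq 4$, and by the Fibonacci recurrence $a_{l+1} - a_l$ is again a Fibonacci number, easily checked to satisfy $a_{l+1} - a_l \geq a_3 = 3$ for every $l \geq 4$. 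Hence Proposition \ref{Theorem: Injection (2)} delivers
\begin{equation*}
    p_F(w \mid \text{no } 2\text{'s})\, p_F(z) \ \geq\ p_F(w + z \mid \text{no } 2\text{'s})
\end{equation*}
for all $w \geq 4$ and $z \geq 6 = 3 a_2$; notice that the lower bound $z \geq 6$ coming from the Proposition matches precisely the standing hypothesis $b \geq 6$ of the theorem.

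The main part of the argument is induction on $a + b$, with the WLOG assumption $a \geq b \geq 6$. The inductive step is the one from the proof of Theorem \ref{Theorem: General B-O}: splitting
\begin{equation*}
    p_F(a + b) \ =\ p_F(a + b - 2) + p_F(a + b \mid \text{no } 2\text{'s}),
\end{equation*}
the induction hypothesis applied to the pair $(a - 2, b)$ bounds the first term by $p_F(a - 2)\, p_F(b)$, and Proposition \ref{Theorem: Injection (2)} applied with $w = a$ and $z = b$ bounds the second term by $p_F(a \mid \text{no } 2\text{'s})\, p_F(b)$. Summing gives $p_F(a + b) < p_F(a) p_F(b)$. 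The step is valid whenever $a \geq 8$ (so that $a - 2 \geq 6$), so the base cases are exactly those pairs (WLOG $a \geq b$) with $a, b \in \{6, 7\}$, namely $(6, 6)$, $(7, 6)$ and $(7, 7)$. These three inequalities would be verified by direct computation of $p_F(6), p_F(7), p_F(12), p_F(13), p_F(14)$.

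I expect the only real obstacle to be the preliminary observation that one must invoke Proposition \ref{Theorem: Injection (2)} rather than Theorem \ref{Theorem: Injection}, forced by the exceptional small gap $a_4 - a_3 = 2$. This imposes the slightly stronger lower bound $z \geq 3 a_2$ in the injection, which conveniently coincides with the hypothesis $b \geq 6$ of the theorem; once this is identified, both the inductive step and the finite base-case verification are entirely routine.
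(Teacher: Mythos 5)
Your proposal is correct and follows essentially the same route as the paper: induction on $a+b$ using the decomposition $p_F(a+b)=p_F(a+b-2)+p_F(a+b\mid\text{ no }2\text{'s})$, bounding the first term by the induction hypothesis and the second by Proposition \ref{Theorem: Injection (2)} (whose hypothesis $a_k-a_l\geqslant a_3$ for $k>l\geqslant4$ you correctly verify for the Fibonacci set, where Theorem \ref{Theorem: Injection} fails because $5-3<3$). The only cosmetic difference is that you isolate the minimal base cases $(6,6)$, $(7,6)$, $(7,7)$, whereas the paper checks all pairs with $12\leqslant a+b\leqslant15$; both suffice.
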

\begin{proof}
    We prove the statement by induction on $a+b$, where $a,b\geqslant6$. At first, we verify the Bessenrodt-Ono type inequality one by one for all the values $12\leqslant a+b\leqslant15$ with $a,b\geqslant6$. Further, it is convenient to assume that $a\geqslant b$ and that the statement is true for every $a+b=12,\ldots,s-1$ for some $s\geqslant16$. For $a+b=s$, we just have
    \begin{align*}
        p_F(a+b)&=p_F(a+b-2)+p_F(a+b|\text{ no } 2\text{'s})\\
        &<p_F(a-2)p_F(b)+p_F(a|\text{ no } 2\text{'s})p_F(b)=p_F(a)p_F(b),
    \end{align*}
    where the inequality is a consequence of both Proposition \ref{Theorem: Injection (2)} and the induction hypothesis.
\end{proof}

In consequence, we obtain the following. 

\begin{thm}\label{Theorem max p_F}
    The maximal value $\max{p_{F}(n)}$ of the Fibonacci partition function on $P_{F}(n)$ is attained at the partitions of the form
    \begin{align*}
        (3^\frac{n}{3}), \hspace{0.2cm} &\text{if} \hspace{0.2cm} n\equiv 0\pmod*{3},\\
        (3^{\floor{\frac{n}{3}}-1},2^2) \hspace{0.2cm} &\text{if} \hspace{0.2cm} n\equiv1\pmod*{3},\\
        (5,3^{\floor{\frac{n}{3}}-2},2) \hspace{0.2cm} &\text{if} \hspace{0.2cm} n\equiv1\pmod*{3} \text{ and }n\geqslant7,\\
        (5^2,3^{\floor{\frac{n}{3}}-3}) \hspace{0.2cm} &\text{if} \hspace{0.2cm} n\equiv1\pmod*{3} \text{ and }n\geqslant10,\\
        (3^{\floor{\frac{n}{3}}},2) \hspace{0.2cm} &\text{if} \hspace{0.2cm} n\equiv2\pmod*{3},\\
        (5,3^{\floor{\frac{n}{3}}-1}) \hspace{0.2cm} &\text{if} \hspace{0.2cm} n\equiv2\pmod*{3} \text{ and }n\geqslant5.
    \end{align*}
    
    In particular, we have
    \begin{align*}
        \max{p_{F}(n)}=\begin{cases}
            3^\frac{n}{3}  \hspace{0.2cm} &\text{if} \hspace{0.2cm} n\equiv0\pmod*{3},\\
            2^2\cdot3^{\floor{\frac{n}{3}}-1}  \hspace{0.2cm} &\text{if} \hspace{0.2cm} n\equiv1\pmod*{3},\\
            2\cdot3^{\floor{\frac{n}{3}}}  \hspace{0.2cm} &\text{if} \hspace{0.2cm} n\equiv2\pmod*{3}.
        \end{cases}
    \end{align*}
\end{thm}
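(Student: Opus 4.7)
The plan is to mirror the proof of Theorem~\ref{Theorem: Max p_{A_d}(n)}, working in three stages: first reduce to Fibonacci partitions whose parts all lie in $\{1,2,3,5\}$; then restrict the multiplicities of $1$'s, $2$'s and $5$'s through elementary equal-sum exchanges; and finally identify the maximisers case by case according to $n \bmod 3$.

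For the reduction, take any Fibonacci partition $\bm{\lambda}$ of $n$ containing a part $F_k$ with $k \geq 8$. Since $F_{k-2} \geq F_6 = 8 \geq 6$, Theorem~\ref{Theorem Fib} applied with $a = F_{k-1}$ and $b = F_{k-2}$ yields
\[ p_F(F_{k-1})\, p_F(F_{k-2}) > p_F(F_{k-1}+F_{k-2}) = p_F(F_k), \]
so replacing the part $F_k$ by the two smaller Fibonacci parts $F_{k-1}$ and $F_{k-2}$ strictly increases $p_F(\bm{\lambda})$. For the two exceptional Fibonacci values $F_6 = 8$ and $F_7 = 13$, which fall outside the range of Theorem~\ref{Theorem Fib}, I verify by direct enumeration that
\[ p_F(8) = 14 < 18 = p_F(5)\,p_F(3) \quad \text{and} \quad p_F(13) = 41 < 108 = p_F(5)^2\,p_F(3), \]
so parts of size $8$ and $13$ can likewise be exchanged for $(5,3)$ and $(5,5,3)$ respectively. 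Iterating, every maximiser uses only parts in $\{1,2,3,5\}$.

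For the second stage, the values $p_F(1)=1$, $p_F(2)=2$, $p_F(3)=3$, $p_F(5)=6$ yield the following strictly improving, equal-sum swaps: three $1$'s $\to$ one $3$; two $1$'s $\to$ one $2$; $(1,2) \to (3)$; $(1,5) \to (3,3)$; $(1,3,3) \to (5,2)$; three $2$'s $\to$ two $3$'s (since $8 < 9$); and three $5$'s $\to$ five $3$'s (since $216 < 243$). After exhausting them, a maximiser of sufficiently large $n$ contains no $1$'s, at most two $2$'s, and at most two $5$'s. For the third stage, splitting by the residue of $n$ modulo $3$ leaves only finitely many remaining shapes, and a direct comparison of their $p_F$-values pins down exactly the partitions listed in the statement, each attaining the common value $3^{n/3}$, $4 \cdot 3^{\lfloor n/3 \rfloor - 1}$ or $2 \cdot 3^{\lfloor n/3 \rfloor}$ according to $n \bmod 3$; the lower bounds $n \geq 5, 7, 10$ on the side shapes simply track how many $3$'s must be present before they can be traded for the stated $5$'s or extra $2$'s.

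The main obstacle is the first stage, and specifically the two exceptional Fibonacci values $F_6 = 8$ and $F_7 = 13$ that lie outside the range of Theorem~\ref{Theorem Fib}. The only genuinely non-routine step is the direct computation of $p_F(13)$ by enumeration (splitting by the largest Fibonacci part used). Small values of $n$ (roughly $n \leq 14$) have to be checked by hand before the iterative exchanges of stage one take over, but this is a finite verification.
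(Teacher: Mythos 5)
Your proposal is correct and follows essentially the same strategy as the paper's proof: use Theorem \ref{Theorem Fib} to break up large Fibonacci parts, verify the exceptional parts $8$ and $13$ directly via $p_F(8)=14<18$ and $p_F(13)=41<108$, eliminate $1$'s and bound the multiplicities of $2$'s and $5$'s by explicit equal-sum exchanges, and finish with a residue-class comparison plus a finite check for small $n$. The only cosmetic difference is that you split a part $F_k$ as $F_{k-1}+F_{k-2}$ in one application of Theorem \ref{Theorem Fib}, whereas the paper repeatedly peels off parts equal to $8$; both reductions work.
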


\begin{proof}
    At first, we make numerical calculations to check the validity of the statement for $n\leqslant16$. Let us notice that $p_F(5)=p_F(3)p_F(2)=6$. 

    Next, considering all possible residue class of $n\pmod*{3}$, one can easy verify that the values of the extended Fibonacci partition function agrees on the appropriate partitions in the statement.
    
    Let us assume that $n\geqslant17$. If we have a partition of $n$ with three parts equal to $5$, then we can replace them by five threes:
    \begin{align*}
        p_F(\bm{(5^3)})=6^3<3^5=p_F(\bm{(3^5)}).
    \end{align*}
    The analogous argument shows that $2$ also can not appear more than two times as a part of $n$. It is also worth noting that if we consider the maximum value, then neither $8$ nor $13$ may occur as a part of $n$ --- that is because of
    \begin{align*}
        p_F(8)=14<18=\max{p_{F}(8)}=p_F(\bm{(5,3)})=p_F(\bm{(3^2,2)})
    \end{align*}
    and
    \begin{align*}
        p_F(13)=41<108=\max{p_{F}(13)}=p_F(\bm{(5^2,3)})=p_F(\bm{(5,3^2,2)})=p_F(\bm{(3^3,2^2)}).
    \end{align*}
    Suppose further that $F_l$ for some $l\geqslant8$ occurs as a part of $n$. By Theorem \ref{Theorem Fib}, we get that
    \begin{align*}
        p_F(F_l)=p_F(F_l-8+8)&<p_F(F_l-8)p_F(8)<p_F(F_l-2\cdot8)p_F^2(8)\\
                &<\cdots<p_F\left((F_l-8\cdot\left(\floor{\frac{F_l}{8}}-1\right)\right)p_F^{\left(\floor{\frac{F_l}{8}}-1\right)}(8).
    \end{align*}
    After checking the validity of the statement and calculating the values of $p_F(n)$ for $n\leqslant16$, we conclude the required property.
\end{proof}

Our last example of the Bessenrodt-Ono type inequality is related to a slightly more exotic $A$-partition function. Actually, we put $A=N:=\{n!:n\geqslant1\}$ and examine the property for, let say, the `factorial' partition function. Since we have already applied Proposition \ref{Theorem: Injection (2)} to establish Theorem \ref{Theorem Fib}, let us now use Theorem \ref{Theorem: General B-O} to prove the following.

\begin{thm}
    Let $N=\{n!:n\geqslant1\}$. For every $a,b\geqslant6$, we have that
    $$p_N(a)p_N(b)>p_N(a+b).$$
\end{thm}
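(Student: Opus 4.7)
My plan is to apply Theorem~\ref{Theorem: General B-O} directly, so the task reduces to (i) verifying that $A=N$ satisfies the structural hypotheses of that theorem and (ii) checking a short finite list of base-case inequalities.

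For $A=N=\{1,2,6,24,120,\ldots\}$ we have $a_2=2$ and $a_3=6$, so $2a_2=4\leqslant 6=a_3$ as required. The separation condition $a_k-a_l\geqslant a_3=6$ for every $k>l\geqslant 3$ reduces to checking $a_{l+1}-a_l=(l+1)!-l!=l\cdot l!\geqslant 6$ for each $l\geqslant 3$; already for $l=3$ this gives $18$, and the gap grows rapidly for larger $l$. Hence all hypotheses on the set $A$ are fulfilled.

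The base case of Theorem~\ref{Theorem: General B-O} requires verifying $p_N(w)p_N(z)>p_N(w+z)$ for $w,z\geqslant 6$ with $2a_3=12\leqslant w+z\leqslant 2(a_3+a_2)-1=15$. By symmetry, the unordered pairs to inspect are $(6,6)$, $(6,7)$, $(6,8)$, $(6,9)$, $(7,7)$ and $(7,8)$. Since $4!=24>15$, no $N$-partition of an integer at most $15$ can use a part larger than $6$, so on this range $p_N(n)$ coincides with the number of partitions of $n$ with parts from $\{1,2,6\}$, easily read off from the generating function
\begin{equation*}
\frac{1}{(1-q)(1-q^2)(1-q^6)}.
\end{equation*}
A direct tabulation gives $p_N(6)=p_N(7)=5$, $p_N(8)=p_N(9)=7$, $p_N(12)=p_N(13)=12$ and $p_N(14)=p_N(15)=15$. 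Consequently every product on the left is at least $25$, while every right-hand side is at most $15$, so all six base inequalities hold with ample margin.

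Having verified both the structural conditions and the finite base case, I would invoke Theorem~\ref{Theorem: General B-O} to conclude $p_N(a)p_N(b)>p_N(a+b)$ for all $a,b\geqslant 6$, which is precisely the statement. There is no substantive obstacle here: the only delicate part is the finite tabulation, but restricting to $\{1,2,6\}$-partitions makes it routine and short enough to double-check by hand.
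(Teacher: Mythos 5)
Your proposal is correct and follows exactly the paper's own route: verify the hypotheses of Theorem~\ref{Theorem: General B-O} for $N$ (with $a_2=2$, $a_3=6$) together with the finite base case $12\leqslant a+b\leqslant 15$, and then invoke that theorem. Your explicit tabulation ($p_N(6)=p_N(7)=5$, $p_N(8)=p_N(9)=7$, $p_N(12)=p_N(13)=12$, $p_N(14)=p_N(15)=15$) checks out and merely fills in details the paper leaves to the reader.
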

\begin{proof}
    It is enough the check the correctness of the claim for all $a,b\geqslant6$ with $12\leqslant a+b\leqslant15$. If we do so, then Theorem \ref{Theorem: General B-O} completes the proof.
\end{proof}

As a direct application of the above property, we obtain the following.

\begin{thm}
    The maximal value $\max{p_{N}(n)}$ of the factorial partition function on $P_{N}(n)$ is attained at the partition
    \begin{align*}
        (2^{\floor{\frac{n}{2}}},1^{n-2\floor{\frac{n}{2}}}).
    \end{align*}
    Moreover, we get
    \begin{align*}
        \max{p_{N}(n)}=2^{\floor{\frac{n}{2}}}.
    \end{align*}
\end{thm}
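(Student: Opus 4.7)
The plan is to show that for every $N$-partition $\bm{\lambda}$ of $n$, one has $p_N(\bm{\lambda}) \leqslant 2^{\floor{n/2}}$, with equality precisely at the canonical partition $\bm{\lambda}_0 := (2^{\floor{n/2}}, 1^{n - 2\floor{n/2}})$. The value $p_N(\bm{\lambda}_0) = p_N(2)^{\floor{n/2}} \cdot p_N(1)^{n-2\floor{n/2}} = 2^{\floor{n/2}}$ is immediate from $p_N(1) = 1$ and $p_N(2) = 2$, so the formula for $\max p_N(n)$ will follow at once.

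The heart of the argument is the auxiliary inequality
\begin{equation*}
    p_N(k!) < 2^{k!/2} \quad \text{for every } k \geqslant 3.
\end{equation*}
The base case $k=3$ is the direct computation $p_N(6) = 5 < 8 = 2^3$. For $k \geqslant 4$, the plan is to iterate the Bessenrodt-Ono inequality from the preceding theorem: since $6$ and $k!-6$ are both at least $6$, one obtains $p_N(k!) < p_N(6) \cdot p_N(k! - 6)$, and splitting off another $6$ is legal so long as the remaining term is still $\geqslant 6$. After $k!/6 - 1$ applications, we arrive at
\begin{equation*}
    p_N(k!) < p_N(6)^{k!/6} = 5^{k!/6}.
\end{equation*}
The desired bound $5^{k!/6} < 2^{k!/2}$ is then equivalent to $5 < 2^3 = 8$, which is obvious.

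Now take an arbitrary $N$-partition $\bm{\lambda}$ of $n$ and group its parts as $a$ copies of $2$, $b$ copies of $1$, and remaining parts $\mu_1, \ldots, \mu_\ell$, each of which is a factorial $k_i! \geqslant 6$ and hence even. Applying the auxiliary inequality to each $\mu_i$,
\begin{equation*}
    p_N(\bm{\lambda}) = 2^a \prod_{i=1}^{\ell} p_N(\mu_i) \leqslant 2^a \prod_{i=1}^{\ell} 2^{\mu_i/2} = 2^{(n-b)/2},
\end{equation*}
with strict inequality whenever $\ell \geqslant 1$. Since $n - b = 2a + \sum \mu_i$ is even, we have $b \equiv n \pmod*{2}$, hence $b \geqslant n - 2\floor{n/2}$, yielding $(n-b)/2 \leqslant \floor{n/2}$. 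Therefore $p_N(\bm{\lambda}) \leqslant 2^{\floor{n/2}}$, with equality only when $\ell = 0$ and $b = n - 2\floor{n/2}$, i.e., only when $\bm{\lambda} = \bm{\lambda}_0$.

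The main obstacle is the base case $k=3$ of the auxiliary inequality, since the Bessenrodt-Ono inequality cannot split $6$ itself (no decomposition $6 = a+b$ has $a, b \geqslant 6$); this is resolved by the direct enumeration giving $p_N(6) = 5 < 8$. Everything else is a routine iteration of the preceding theorem plus a short parity bookkeeping step.
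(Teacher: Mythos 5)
Your proof is correct, and it fills in what the paper literally leaves ``as an exercise for the reader.'' The key computation $p_N(6)=5<8=2^{3}$, the iterated Bessenrodt--Ono bound $p_N(k!)<5^{k!/6}<2^{k!/2}$ for $k\geqslant 4$, and the parity argument forcing $b\geqslant n-2\floor{n/2}$ are all sound, and together they give both the value $2^{\floor{n/2}}$ and the uniqueness of the maximizing partition. Your argument follows the same template the author uses for the analogous maximum-value results for $b_m$, $p_{A_d}$ and $p_F$ (replace each large part by copies of the smallest part $>1$ via iterated Bessenrodt--Ono), but your version is cleaner in one respect: rather than a part-by-part replacement argument, you prove the uniform bound $p_N(k!)<2^{k!/2}$ once and then close the argument with a single global inequality $p_N(\bm{\lambda})\leqslant 2^{(n-b)/2}\leqslant 2^{\floor{n/2}}$, which makes the equality analysis transparent.
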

\begin{proof}
    We leave the proof as an exercise for the reader.
\end{proof}

At the end of this manuscript, let us state a conjecture which arises from many works concerning Bessenrodt-Ono type inequalities, especially from our investigation and Gajdzica's recent paper \cite{KG2}.

\begin{con}
    Let $A$ be an arbitrary set of positive integers with $\gcd A=1$ and $\#A\geqslant2$. For all sufficiently large values of $a$ and $b$, we have
    \begin{align*}
        p_A(a)p_A(b)>p_A(a+b).
    \end{align*}
\end{con}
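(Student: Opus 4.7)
The natural attack is to split according to whether $A$ is finite or infinite, and in each case reduce the Bessenrodt-Ono inequality to an asymptotic statement via standard estimates for $p_A(n)$. When $A$ is finite with $\#A = k \geqslant 2$ and $\gcd A = 1$, the generating function $\prod_{a \in A}(1-q^a)^{-1}$ is rational, its pole at $q=1$ has order $k$, and the coprimality hypothesis guarantees that every other pole on the unit circle has strictly smaller order. A standard partial-fraction argument then yields $p_A(n) = n^{k-1}/\bigl((k-1)!\prod_{a\in A}a\bigr) + O(n^{k-2})$, from which $p_A(a)p_A(b)/p_A(a+b)$ behaves asymptotically like a positive constant times $(ab/(a+b))^{k-1}$, and therefore tends to infinity as $\min\{a,b\} \to \infty$.

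For infinite $A$ the natural tool is Meinardus' theorem (or a Wright-type refinement), which under standard regularity conditions on the Dirichlet series $D_A(s) = \sum_{a\in A} a^{-s}$ produces an asymptotic $\log p_A(n) \sim K n^{\alpha}$ with $K > 0$ and $\alpha \in (0,1)$. The strict subadditivity of $x^\alpha$ on the positive reals then gives $\log p_A(a) + \log p_A(b) - \log p_A(a+b) \sim K\bigl(a^\alpha + b^\alpha - (a+b)^\alpha\bigr) \to \infty$ whenever both $a$ and $b$ tend to infinity, which is exactly what the conjecture asks for once $a$ and $b$ are sufficiently large.

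The main obstacle is handling \emph{arbitrary} infinite $A$ with $\gcd A = 1$: Meinardus-type theorems require regularity hypotheses (analytic continuation of $D_A$, polynomial growth bounds) that can fail for very sparse or irregularly distributed $A$, and moreover $\gcd A = 1$ does not force $1 \in A$, so the injections of Section 3 do not directly apply. To cover the full conjecture I would pursue a combinatorial route in the spirit of Theorem \ref{Theorem: Injection}, selecting two elements $a_i, a_j \in A$ with $\gcd(a_i, a_j) = 1$ and constructing an injection $P_A(a+b) \hookrightarrow P_A(a) \oplus P_A(b)$: a Sylvester--Frobenius argument should guarantee that, for $a$ and $b$ sufficiently large, every $A$-partition of $a+b$ can be split into pieces of the prescribed sizes by locally rearranging copies of $a_i$ and $a_j$ to absorb residues modulo $\gcd(a_i, a_j) = 1$. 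Upgrading this injection to a strict inequality (showing that the image omits at least one element of $P_A(a) \oplus P_A(b)$) is the core difficulty, and would likely require exhibiting an ``exceptional'' pair of partitions, for instance built from a single fixed part $a_l \in A$, that is provably absent from the image.
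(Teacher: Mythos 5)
The statement you are trying to prove is not a theorem of the paper: it appears in the concluding section as a \emph{conjecture}, and the paper gives no proof of it, so there is no argument of the author's to compare yours against. Read as a proof, your proposal has genuine gaps, most of which you flag yourself. The finite case is essentially sound: with $\gcd A=1$ and $\#A=k\geqslant 2$, the Schur--Sylvester asymptotic $p_A(n)=n^{k-1}/\bigl((k-1)!\prod_{a\in A}a\bigr)+O(n^{k-2})$ does give $p_A(a)p_A(b)/p_A(a+b)\to\infty$ as $\min\{a,b\}\to\infty$; this case is in fact already known (see \cite[Theorem 5.4]{KG2}, which the paper cites).

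For infinite $A$, neither of your routes closes. Meinardus-type theorems need analytic hypotheses on $D_A(s)=\sum_{a\in A}a^{-s}$ (continuation, growth bounds on vertical lines) that an arbitrary set with $\gcd A=1$ need not satisfy; and even where they apply, the bare statement $\log p_A(n)\sim Kn^{\alpha}$ is too coarse --- to conclude that $\log p_A(a)+\log p_A(b)-\log p_A(a+b)\to\infty$ you need an error term that is $o(\min\{a,b\}^{\alpha})$ uniformly, which is precisely what the regularity hypotheses are there to deliver. Your combinatorial fallback is the right instinct --- it is the method of Section 3 --- but Theorem \ref{Theorem: Injection} and Proposition \ref{Theorem: Injection (2)} are proved under strong structural assumptions ($1\in A$, the spacing condition $a_k-a_l\geqslant a_3$, and $2a_2\leqslant a_3$ for Theorem \ref{Theorem: General B-O}), and you do not actually construct the claimed injection for general $A$: the ``Sylvester--Frobenius argument'' is an expectation, not a map, and you concede that forcing strictness is the core unsolved difficulty. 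So what you have is a correct proof of the finite case, a conditional argument for Meinardus-regular infinite sets, and an accurate diagnosis of why the general case is hard --- consistent with the paper leaving the statement as a conjecture, but not a proof of it.
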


\section*{Acknowledgments}
I would like to thank Piotr Miska and Maciej Ulas for their time, profound comments and valuable suggestions. This research was funded by both a grant of the National Science Centre (NCN), Poland, no. UMO-2019/34/E/ST1/00094. 


\begin{thebibliography}{99}
\bibitem{AK} A. K. Agarwal, Padmavathamma, M. V. Subbarao, {\it Partition Theory}, Atma Ram and Sons, Chandigarh, 2005.
\bibitem{AGM} A. A. Alanazi, S. M. Gagola III, A. O. Munagi, {\it Combinatorial proof of a partition inequality of Bessenrodt–Ono}, Ann. Comb. 21 (2017), 331–337.
\bibitem{Al} G. Alkauskas, {\it $m$-nariniai skaidinia}, Liet. Mat. Rink. 43 (2003), 123–132.
\bibitem{A1} G. E. Andrews, {\it Congruence properties of the $m-$ary partition function}, J. Number Theory 3 (1971), 104–110.
\bibitem{GA2} G. E. Andrews, {\it The Theory of Partitions, the Encyclopedia of Mathematics and Its Applications Series}, Addison-Wesley, New York (1976), reissued, Cambridge University Press, New York (1998).
\bibitem{GA1} G. E. Andrews, K. Eriksson, {\it Integer Partitions}, Cambridge University Press, Cambridge, 2004.
\bibitem{BB} O. Beckwith, C. Bessenrodt, {\it Multiplicative properties of the number of k-regular partitions}, Ann. Comb. 20 (2) (2016), 231–250.
\bibitem{BO} C. Bessenrodt, K. Ono, {\it Maximal multiplicative properties of partitions}, Ann. Comb. 20 (1) (2016), 59–64.
\bibitem{BKRT} K. Bringmann, B. Kane, L. Rolen, Z. Tripp, {\it Fractional partitions and conjectures of Chern-Fu-Tang and Heim-Neuhauser}, Trans. Am. Math. Soc. Ser. B 8 No. 21 (2021), 615-634.

\bibitem{Chern} S. Chern, S. Fu, D. Tang, {\it Some inequalities for $k$-colored partition functions}, Ramanujan J. 46
(2018), 713–725.



\bibitem{Craig} W. Craig, A. Pun, {\it A note on the higher order Tur\'an inequalities for $k$-regular partitions}, Res. Number Theory 7, 5 (2021).
\bibitem{Dawsey} M. L. Dawsey, R. Masri, {\it Effective bounds for the Andrews spt-function}, Forum Math. 31 (3) (2019), 743–767.
\bibitem{DSP} S. DeSalvo, I. Pak, {\it Log-concavity of the partition function}, Ramanujan J. 38 (2015), 61-73.

\bibitem{KG2} K. Gajdzica, {\it Log-concavity of the restricted partition function $p_\mathcal{A}(n,k)$ and the new Bessenrodt-Ono type inequality}, J. Number Theory 251 (2023), 31-65.
\bibitem{KG3} K. Gajdzica, B. Heim, M. Neuhauser, {\it Polynomization of the Bessenrodt-Ono type inequalities for A-partition functions}, \url{https://arxiv.org/pdf/2308.07698.pdf} (2023).


\bibitem{HN2} B. Heim, M. Neuhauser, {\it Log-concavity of infinite product generating functions} Res. Number Theory 8, 53 (2022).
\bibitem{HN3} B. Heim, M. Neuhauser, {\it Proof of the Bessenrodt-Ono inequality by induction}, Res. Number Theory 8 (2022) 3.
\bibitem{HN7} B. Heim, M. Neuhauser, {\it Log-Concavity of Infinite Product and Infinite Sum Generating Functions}, International Journal of Number Theory, \url{https://doi.org/10.1142/S1793042124500192}, to appear.
\bibitem{HNT2} B. Heim, M. Neuhauser, R. Tr\"{o}ger, {\it Inequalities for Plane Partitions}, Ann. Comb. 27 (2023), 87–108.
\bibitem{H} M. D. Hirschhorn, {\it The power of $q$: A Personal Journey}, Developments in Mathematics, Vol. 49, Springer, 2017.
\bibitem{Hou} E. Hou, M. Jagadeesan, {\it Dyson’s partition ranks and their multiplicative extension}, Ramanujan J. 45 (3) (2018), 817–839.

\bibitem{L} D. H. Lehmer, {\it On the remainders and convergence of the series for the partition function}, Transl. Am. Math. Soc. 46 (1939), 362–373.
\bibitem{Li} X. Li, {\it Polynomization of the Liu–Zhang inequality for the overpartition function}, Ramanujan J (2023).

\bibitem{Liu} E. Y. S. Liu, H. W. J. Zhang, {\it Inequalities for the overpartition function}, Ramanujan J 54 (2021), 485–509.
\bibitem{Mahler} K. Mahler, {\it On a special functional equation}, J. London Math. Soc. 15 (1940), 115-123.
\bibitem{Males} J. Males, {\it Asymptotic equidistribution and convexity for partition ranks}, Ramanujan J. 54 (2) (2021), 397–413.
\bibitem{N} J.-L. Nicolas, {\it Sur les entiers N pour lesquels il y a beaucoup des groupes ab\'{e}liens d'ordre N}, Ann. Inst. Fourier 28 No. 4 (1978), 1-16.
\bibitem{O’Sullivan} C. O’Sullivan, {\it Detailed asymptotic expansions for partitions into powers}, International Journal of Number Theory, Vol. 19 No. 9 (2023), 2163-2196.
\bibitem{Ono} K. Ono, S. Pujahari, L. Rolen, {\it Tur\'an inequalities for the plane partition function}, Advances in Mathematics, Vol. 409, Part B (2022), 108692.


\bibitem{RS1} \O. J. R{\o}dseth, {\it Some arithmetical properties of $m-$ary partitions}, Proc. Camb. Philos. Soc. 68 (1970), 447–453.
\bibitem{RS2} \O. J. R{\o}dseth, J. A. Sellers, {\it On $m$-ary partition function congruences: A fresh look at\linebreak a past problem}, J. Number Theory 87 (2001), 270–281.



\bibitem{Sills} A. V. Sills, {\it An invitation to the Rogers-Ramanujan identities}, With a foreword by George E. Andrews. CRC Press, Boca Raton, FL, 2018.
\bibitem{Tenenbaum}  G. Tenenbaum, J. Wu, and Y.-L. Li, {\it Power partitions and saddle-point method}, J. Number Theory 204 (2019), 435–445.







\bibitem{MU} M. Ulas, {\it Some observations and speculations on partitions into $d-$th powers}, Bull. Aust. Math. Soc., to appear.
\bibitem{Z} B. Żmija, {\it Recurrence sequences connected with the $m$-ary partition function and their divisibility properties}, J. Number Theory 211, 322-370 (2020).
\end{thebibliography}
\end{document}